\newtheorem{thm}{Theorem}
\newtheorem{lemma}[thm]{Lemma}
\newtheorem{prop}[thm]{Proposition}
\newtheorem{rem}[thm]{Remark}
\newtheorem{cor}[thm]{Corollary}
\newtheorem{df}[thm]{Definition}
\newtheorem{ex}[thm]{Example}
\newcommand{\R}{\mathbb{R}}
\newcommand{\C}{\mathbb{C}}
\newcommand{\Z}{\mathbb{Z}}
\newcommand{\tr}{\mathrm{Tr}}
\newcommand{\inner}[1]{\left<#1\right>}
\title{Tolerance relations and quantization}
\date{May 2022}
\author[F.~D'Andrea, G.~Landi, F.~Lizzi]{Francesco D'Andrea, Giovanni Landi, Fedele Lizzi}
\address[F.~D'Andrea]
{\begin{tabular}[t]{l}
Universit\`a di Napoli Federico II
and I.N.F.N.~Sezione di Napoli \\
Complesso MSA, Via Cintia, 80126 Napoli, Italy
\end{tabular}}
\email{francesco.dandrea@unina.it}
\address[G.~Landi]
{\begin{tabular}[t]{l}
Universit\`a di Trieste, Via A. Valerio, 12/1, 34127 Trieste, Italy \\
Institute for Geometry and Physics (IGAP) Trieste, Italy \\
I.N.F.N.~Sezione di Trieste, Italy
\end{tabular}}
\email{landi@units.it}
\address[F.~Lizzi]
{\begin{tabular}[t]{l}
Universit\`a di Napoli Federico II and I.N.F.N.~Sezione di Napoli \\
Complesso MSA, Via Cintia, 80126 Napoli, Italy
\\[3pt]
Institut de Ci{\'e}ncies del Cosmos, Universitat de Barcelona
\end{tabular}
}
\email{fedele.lizzi@na.infn.it}
\subjclass[2020]{Primary: 58B34; Secondary: 17A99; 81R60.}
\keywords{Tolerance relations, non-associative quantization, operator systems.}
\begin{document}

\begin{abstract}
It is well known that ``bad'' quotient spaces (typically: non-Hausdorff) can be studied by associating to them the groupoid C*-algebra of an equivalence relation, that in the ``nice'' cases is Morita equivalent to the C*-algebra of continuous functions vanishing at infinity on the quotient space. It was recently proposed in 
\cite{CvS21} that a similar procedure for relations that are reflexive and symmetric but fail to be transitive (i.e.~\emph{tolerance relations}) leads to an operator system. In this paper we observe that such an operator system carries a natural product that, although in general non-associative, arises in a number of relevant examples. We relate this product to truncations of (C*-algebras of) topological spaces, in the spirit of \cite{DLM14}, discuss some geometric aspects and a connection with positive operator valued measures.
\end{abstract}

\maketitle

\vspace*{-5mm}

\tableofcontents

\vspace*{-5mm}

\section{Introduction}

\setlength{\parskip}{1ex}

A standard construction in Noncommutative Geometry is that of the C*-algebra associated to an equivalence relation $R$ on a locally compact Hausdorff space $X$: one can construct the groupoid $\mathcal{G}$ of the relation $R$ and, under some technical conditions --- for example if $\mathcal{G}$ is {\'e}tale ---, one can construct the convolution algebra of $\mathcal{G}$ and complete it to a C*-algebra (see e.g.~\cite{SSW20}).
A source of examples comes from group actions: if $G$ is a discrete group with a continuous action on $X$, the action groupoid (corresponding to the relation ``being on the same orbit'') is {\'e}tale; if $X$ is compact, the full groupoid C*-algebra is isomorphic to the crossed product $C(X)\rtimes G$; if, in addition, the action is free and proper --- which means that $X/G$ is a compact Hausdorff space, and even a smooth manifold if $X$ is a smooth manifold and the action is smooth --- then the above crossed product is strongly Morita equivalent to the commutative C*-algebra $C(X/G)$ (this is a special case of, e.g., ``situation 2'' in \cite{Rie82}).
Applications of this construction cover foliations, orbifolds, tilings of the plane, dynamical systems arising in number theory such as the Bost-Connes system, just to name a few \cite{Con94,CM08}.
Notice that associativity of the groupoid partial composition law, and then of the convolution product, is a consequence of transitivity of $R$.

It is natural to wonder how much of the above picture can be generalized to relations $R$ that are reflexive and symmetric but not transitive, i.e.~those that in the literature are called ``tolerance relations''. It was observed by A.~Connes and W.D.~van Suijlekom in \cite{CvS20} that from any tolerance relation one can get an \emph{operator system}, which they propose as a starting point for a generalization of one of the main ingredients of Noncommutative Geometry, namely \emph{spectral triples} \cite{Con94,Lan97,GVF01}. The connection between tolerance relations and operator systems was then developed in \cite{CvS21}.
It turns out that these operator systems possess, in fact, the additional structure of an algebra. However, in general, this algebra would not be associative.

The aim of this paper is to study the convolution algebra of a tolerance relation. We start, in \S\ref{sec:2}, with a short introduction to tolerance relations and present a few motivating examples. In \S\ref{sec:3} we discuss the construction of convolution algebras of tolerance relations: we study finite-dimensional examples, infinite-dimensional ones associated to {\'e}tale tolerance relations, and special actions of magmas on sets replacing, in this picture, the above-mentioned action groupoids. We will also relate this construction to truncations of (C*-algebras of) topological spaces, in the spirit of \cite{DLM14}. In \S\ref{sec:5} we focus on state spaces and discuss some geometric aspects. In particular, we will give an explicit description of the set of pure states of the finite-dimensional systems from \S\ref{sec:3}. Finally, \S\ref{sec:6} is devoted to a curious example of a positive operator valued measure (POVM) that comes from a natural tolerance relation on the circle.

\section{Tolerance relations}\label{sec:2}

It was Poincar{\'e} who first observed, in a discussion about mathematical and physical continuum,
that ``equality'' in the real world is not always transitive, due to potential measurement errors \cite[Chap.~2]{PoiBook}. A beautiful concrete example with Wolfram Mathematica was suggested in \cite{McC15}. The code is reported in Table \ref{tab:math}: here we define $b:=a+\varepsilon$ and $c:=b+\varepsilon$ with $\varepsilon$ sufficiently small. Due to the finite storage capacity for numbers, the computer considers $a=b$ and $b=c$, but recognizes that $a\neq c$. This is exactly the behaviour one would expect in a physical experiment when using an instrument with finite resolution.

\begin{table}[t]
\begin{boxedminipage}{0.9\textwidth}
\small
\begin{verbatim}
eps = 50*$MachineEpsilon;
a = 1;
b = a + eps;
c = b + eps;
{a == a, b == b, c == c, a == b, b == a, b == c, c == b, a == c, c == a}

(* Out:  {True, True, True, True, True, True, True, False, False}  *)
\end{verbatim}
\end{boxedminipage}
\medskip
\caption{Equality in Wolfram Mathematica.}\label{tab:math}
\vspace{-10pt}
\end{table}

A relation that is reflexive and symmetric, but not necessarily transitive, is called a \emph{tolerance relation}.
The name was coined in \cite{Zee62}, while a mathematical theory was later developed in \cite{Sos86}.

The one in Table \ref{tab:math} is an example of ``proximity'' relation: on a metric space $(X,d)$ we can fix an $\varepsilon>0$ and consider the relation $R\subset X\times X$ defined by
\begin{equation}\label{eq:proximity}
(x,y)\in R \iff d(x,y)<\varepsilon .
\end{equation}
Such a relation in general fails to be transitive, except in special cases: for example if $d$ is an ultrametric (which means $d(x,z)\leq\max\{d(x,y),d(y,z)\}\;\forall\;x,y,z\in X$), then \eqref{eq:proximity} is an equivalence relation.

Another important example is the tolerance relation associated to a cover. Let $X$ be a set and $\mathcal{U}$ a collection of subsets covering $X$. The relation $R\subset X\times X$ given by
\begin{equation}\label{eq:covering}
(x,y)\in R\iff  \left\{ \exists\;S\in\mathcal{U}: x,y\in S \right\}
\end{equation}
is clearly a tolerance relation. From a physical point of view, we can imagine that this models an experiment where two points contained in the same set of the cover are not distinguishable one from the other. For $X=\R^n$ with Euclidean distance, the example \eqref{eq:proximity} corresponds to the cover of $\R^n$ by open balls of diameter $\varepsilon$.

It is worth mentioning that the idea of using finite covers to approximate a topological space is not new:
it can be traced back to Sorkin and his work on posets \cite{Sor91}, and to \cite{BBELLST96,LL99} in the framework of Noncommutative Geometry.

A notable example of tolerance relation in physics is \emph{causality} in special relativity: the set of pairs $(x,y)$ of points in Minkowski space that are causally connected, that means $x-y$ is either timelike or lightlike, is reflexive and symmetric but not transitive. This is illustrated pictorially in Figure \ref{fig:causality}, where $(x,y),(y,z)\in R$ and $(x,z)\notin R$.

\begin{figure}[t]
\begin{center}
\begin{tikzpicture}[font=\small]

\begin{scope}[xshift=-1.5cm,scale=0.3,very thin]

\draw[fill=cyan!40] (0,-2) ellipse (1.3 and 0.3);
\fill[cyan!40] (-1.3,1.98) -- (0,0) -- (1.3,1.98) -- cycle;
\fill[cyan!40] (-1.3,-1.98) -- (0,0) -- (1.3,-1.98) -- cycle;
\draw (-1.3,1.98) -- (1.3,-1.98) (-1.3,-1.98) -- (1.3,1.98);
\draw[fill=cyan!10] (0,2) ellipse (1.3 and 0.3);
\filldraw (0,0) circle (0.1) node[left=1pt] {$x$};
\draw[dashed,black!70!cyan] (1.3,-2) arc(0:180:1.3 and 0.3);

\end{scope}

\begin{scope}[xshift=1.5cm,scale=0.3,very thin]

\draw[fill=cyan!40] (0,-2) ellipse (1.3 and 0.3);
\fill[cyan!40] (-1.3,1.98) -- (0,0) -- (1.3,1.98) -- cycle;
\fill[cyan!40] (-1.3,-1.98) -- (0,0) -- (1.3,-1.98) -- cycle;
\draw (-1.3,1.98) -- (1.3,-1.98) (-1.3,-1.98) -- (1.3,1.98);
\draw[fill=cyan!10] (0,2) ellipse (1.3 and 0.3);
\filldraw (0,0) circle (0.1) node[right=1pt] {$z$};
\draw[dashed,black!70!cyan] (1.3,-2) arc(0:180:1.3 and 0.3);

\end{scope}

\begin{scope}[yshift=-3cm]

\draw (50:3.5) -- (0,0) -- (130:3.5);
\draw[dashed] (50:4) -- (50:3.5) (130:3.5) -- (130:4);
\filldraw (0,0) circle (0.03) node[below=1pt] {$y$};

\end{scope}

\end{tikzpicture}
\end{center}
\vspace*{-15pt}
\caption{Causality is not transitive.}\label{fig:causality}
\end{figure}
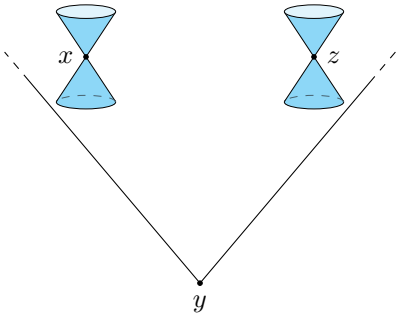

Finally, we mentioned that a source of examples of equivalence relations is given by group actions on sets. Given an action $\alpha:G\times X\to X$ of a group $G$ on a set $X$, the property
\begin{equation}\label{eq:lackproperty}
\alpha_g(\alpha_h(x))=\alpha_{gh}(x)\quad\forall\;g,h\in G,x\in X,
\end{equation}
ensures that the relation $R:=\{(x,\alpha_gx):g\in G,x\in X\}$ on $X$ is transitive. There are a number of examples, however, where the condition \eqref{eq:lackproperty} is not satisfied.

Suppose, for example, that we have a morphism
from a group $G$ to the group $$\mathrm{Out}(A):=\mathrm{Aut}(A)/\mathrm{Inn}(A)$$ of outer automorphisms of some unital C*-algebra $A$. In the Noncommutative Geometry approach to particle physics, 
when $A$ is almost commutative,
this corresponds to an action of $G$ as diffeomorphisms of the underlying manifold (cf.~\cite{CM08,vS15} and references therein, or the recent
physically-oriented review \cite{DKL19}). In such a situation, we can lift the above map to a map $\alpha:G\to\mathrm{Aut}(A)$ by picking up a representative for each class in $\mathrm{Out}(A)$. Such a map $\alpha$, however, will be only an action modulo inner automorphism.
If $A$ has only trivial inner automorphisms, given by the adjoint action of the unitary group $U(A)$, then
\begin{equation}\label{eq:outer}
\alpha_g\circ\alpha_h=\mathrm{Ad}_{f(g,h)}\circ\alpha_{gh}
\end{equation}
for all $g,h\in G$ and for a suitable function $f:G\times G\to U(A)$.
This reduces to \eqref{eq:lackproperty} if $f$ has image in the center of $A$, and was one of the motivating examples in \cite{BHM06} to develop a theory of non-associative crossed products.

Another class of examples where the condition \eqref{eq:lackproperty} fails is given by group \emph{quasi-actions}, that are a natural coarse generalization of isometric group actions:
here \eqref{eq:lackproperty} is replaced by the weaker condition
$$
d\big(\alpha_g\alpha_h(x),\alpha_{gh}(x)\big)\leq\varepsilon\quad\forall\;g,h\in G,x\in X,
$$
where $\varepsilon>0$ is a constant and $d$ is a distance on $X$ (see e.g.~\cite{MSW03}).

In \S\ref{sec:magmas} we will discuss a general setting where one can pass from an action to a tolerance relation.

\section{From relations to algebras}\label{sec:3}

In this section we mimic the construction of the convolution algebra of an {\'e}tale groupoid associated to an equivalence relation and explain how to pass from a tolerance relation to a complex $*$-algebra, possibly non-associative.
We will then study two special cases: tolerance relations on finite sets, and coming from suitable actions of magmas on sets.

Non-associative algebras naturally arise in the framework of cochain quantization, which consists in constructing a new product on
a module algebra $A$ over an Hopf algebra $H$ using an invertible element $F\in H\otimes H$. See e.g.~\cite{AM99} for a description of octonions as a cochain quantization of the group algebra of the group $(\Z/2\Z)^3$.
A similar construction was used in \cite{DF15} to describe the modules of ``sections of line bundles'' on the noncommutative torus as $U(1)$-spectral subspaces of a suitable non-associative formal deformation of the algebra of smooth functions on a Heisenberg manifold.
In deformation quantization, non-associative algebras arise if one tries to quantize a manifold $M$ equipped with a $2$-form $\omega$ which is non-degenerate but not closed, or more generally a manifold $M$ equipped with a skew-symmetric bi-derivation $\{\,,\,\}:C^\infty(M)\times C^\infty(M)\to C^\infty(M)$ which does not satisfy Jacobi identity; a similar phenomenon is encoutered in string theory in the presence of a B-field background \cite{Sza17}.

Let $X$ be a topological space and $R\subset X\times X$ a tolerance relation. We will say that $R$ is \emph{{\'e}tale} if there exists a topology on $R$ such that the projection $\mathrm{pr}_1:R\ni (x,y)\mapsto x\in X$ onto the first component is a local homeomorphism.\footnote{If $R$ is an equivalence relation, this simply means that the groupoid of the equivalence relation is {\'e}tale.}
We stress that the topology on $R$ is not necessarily the relative topology coming from the product topology on $X\times X$. In fact, it is usually finer \cite{GPS04}. See also Example \ref{ex:finer}.

\begin{lemma}
If $R$ is Hausdorff and {\'e}tale, the space $C_c(R)$ of compactly-supported complex-valued continuous functions on $R$ is a $*$-algebra with product and involution given by:
\begin{align}
(f\star g)(x,z) &:=\sum_{y\in X:(x,y),(y,z)\in R}f(x,y)g(y,z) \label{eq:prodgroupoid} \\
f^*(x,z) &:=\overline{f(z,x)} \label{eq:invgroupoid}
\end{align}
for all $(x,z)\in R$.
\end{lemma}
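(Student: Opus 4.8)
The plan is to imitate, almost verbatim, the construction of the convolution algebra of an \'etale groupoid associated to an equivalence relation, the point being to see exactly where transitivity is used in that argument and how to do without it. (Implicitly one should take the topology on $R$ to be such that the flip $\sigma\colon R\to R$, $\sigma(x,y)=(y,x)$, is a homeomorphism --- equivalently that $\mathrm{pr}_2$ is also a local homeomorphism --- since this is in any case what makes \eqref{eq:invgroupoid} land in $C_c(R)$; and, as usual, that $X$, hence $R$, is locally compact Hausdorff.)

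\emph{The sum defining $\star$ is finite.} Fix $(x,z)\in R$. Since $\mathrm{pr}_1$ is a local homeomorphism, every fibre $\mathrm{pr}_1^{-1}(x)$ is discrete in $R$; covering the compact set $\mathrm{supp}(f)$ by finitely many open sets on each of which $\mathrm{pr}_1$ is injective shows that $\{y:(x,y)\in\mathrm{supp}(f)\}$ is finite, so \eqref{eq:prodgroupoid} has finitely many non-zero terms and $(f\star g)(x,z)$ is well defined. The same bookkeeping gives $\mathrm{supp}(f\star g)\subseteq\mathrm{pr}_1^{-1}\big(\mathrm{pr}_1(\mathrm{supp}\,f)\big)\cap\mathrm{pr}_2^{-1}\big(\mathrm{pr}_2(\mathrm{supp}\,g)\big)$.

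\emph{$f\star g$ is continuous with compact support --- the crux.} As in the groupoid case, a partition of unity reduces this to $f,g$ supported on open \emph{bisections}, i.e.\ open $U,V\subseteq R$ on which both $\mathrm{pr}_1$ and $\mathrm{pr}_2$ restrict to homeomorphisms onto open subsets of $X$; such sets form a basis for the topology of $R$ because $\mathrm{pr}_1$ and $\mathrm{pr}_2$ are local homeomorphisms. For $f\in C_c(U)$, $g\in C_c(V)$ a contributing $y$ in \eqref{eq:prodgroupoid} is pinned down both by $x$ (through $U$) and by $z$ (through $V$), and one finds that $f\star g$ is supported on $(U\circ V)\cap R$, where $U\circ V$ is the set of $(x,z)\in X\times X$ admitting a (then unique) $y$ with $(x,y)\in U$, $(y,z)\in V$, and that on this set $(f\star g)(x,z)$ equals the product of the values of $f$ and $g$ transported along the canonical continuous maps. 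In the groupoid case transitivity forces $U\circ V\subseteq R$ and makes $U\circ V$ itself an open bisection, so $f\star g$ is visibly continuous and compactly supported. Here $U\circ V$ need not be contained in $R$ --- this is exactly the failure of transitivity --- and $(U\circ V)\cap R$, which has the shape ``graph of a continuous map, intersected with $R$'', is in general only a closed subset of an open subset of $R$, not an open one, since the \'etale topology on $R$ may be strictly finer than the one induced from $X\times X$. So the hard part --- the step I expect to be the real obstacle --- is to show that near each point of $R$ only the ``correctly composable'' pairs contribute, and that they do so on a whole neighbourhood, so that locally $f\star g$ is a finite sum of transported products of continuous functions; from this one reads off both continuity and compactness of $\mathrm{supp}(f\star g)$. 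This is the one place where the precise \'etale topology on $R$, Hausdorffness, and compactness of the supports must be used together.

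\emph{The $*$-algebra axioms.} These are then routine. Bilinearity of $\star$, conjugate-linearity of $f\mapsto f^*$, and $(f^*)^*=f$ are immediate from \eqref{eq:prodgroupoid}--\eqref{eq:invgroupoid}. For the remaining identity one reindexes the finite sum by symmetry of $R$ (which identifies $\{y:(z,y),(y,x)\in R\}$ with $\{y:(x,y),(y,z)\in R\}$):
\[
(f\star g)^*(x,z)=\overline{(f\star g)(z,x)}=\sum_{y:\,(x,y),(y,z)\in R}\overline{g(y,x)}\;\overline{f(z,y)}=(g^*\star f^*)(x,z),
\]
so $(f\star g)^*=g^*\star f^*$. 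Associativity is neither claimed nor, in general, true --- which is precisely the phenomenon the rest of the paper is about --- so nothing further needs checking.
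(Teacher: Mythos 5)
The part of your argument that you actually carry out coincides with the paper's proof, which in fact consists \emph{only} of the finiteness step: the authors observe that $(\mathrm{pr}_1)^{-1}(x)\cap\mathrm{supp}(f)$ is the intersection of a closed discrete subset with a compact subset of the Hausdorff space $R$, hence finite, so that the sum in \eqref{eq:prodgroupoid} has finitely many nonzero terms. Your version of this (cover $\mathrm{supp}(f)$ by finitely many neighbourhoods on which $\mathrm{pr}_1$ is injective) is the same computation, and your verification of $(f\star g)^*=g^*\star f^*$ from the symmetry of $R$ is routine and correct. The two points you single out for extra care --- that \eqref{eq:invgroupoid} needs the flip $\sigma(x,y)=(y,x)$ to be a homeomorphism of $R$ for $f^*$ to lie in $C_c(R)$, and that continuity and compact support of $f\star g$ are genuinely at issue --- are simply not addressed in the paper at all.

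The obstacle you flag and leave open is real, and in the stated generality it cannot be overcome. Take $X=\R$ and $R=\Delta\cup\Gamma_+\cup\Gamma_-\cup\Sigma_+\cup\Sigma_-$, where $\Gamma_\pm=\{(x,x\pm1):x\in\R\}$, $\Sigma_+=\{(x,x+2):x<0\}$ and $\Sigma_-=\{(x,x-2):x<2\}$. This is reflexive and symmetric; the five pieces lie on distinct parallel lines, so each is clopen in $R$ already for the subspace topology from $\R^2$, and $\mathrm{pr}_1$ restricts on each piece to a homeomorphism onto an open subset of $\R$ (namely $\R$, $\R$, $\R$, $(-\infty,0)$, $(-\infty,2)$); hence $R$ is Hausdorff and \'etale, and the flip is a homeomorphism. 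Now take $f,g\in C_c(R)$ supported in $\Gamma_+$ with $f(x,x+1)=\phi(x)$ and $g(y,y+1)=\psi(y)$ for $\phi,\psi\in C_c(\R)$. The only contributing $y$ in \eqref{eq:prodgroupoid} is $y=x+1$, forcing $z=x+2$, and $(x,x+2)\in R$ exactly when $x<0$; so $f\star g$ equals $\phi(x)\psi(x+1)$ on $\Sigma_+\simeq(-\infty,0)$ and vanishes elsewhere. If $\phi(0)\psi(1)\neq0$ its support contains a set homeomorphic to $[-\epsilon,0)$, which is closed in $\Sigma_+$ but not compact, so $f\star g\notin C_c(R)$. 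Thus the step you correctly refused to wave through is precisely where the lemma, read literally, fails without further hypotheses (or without weakening the conclusion to ``$\star$ is a well-defined bilinear map into the space of all functions on $R$'', which is all the paper's proof establishes and all that is used in the sequel, where the relations are discrete or come from genuine group actions). In short: the completed parts of your proposal match the paper, and the part you could not complete is missing from the paper too --- your diagnosis of it as the crux is exactly right.
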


\begin{proof}
We wish to show that only finitely many elements in the sum \eqref{eq:prodgroupoid} are non-zero, so that the product is well-defined. We can rewrite the right hand side of \eqref{eq:prodgroupoid} as a sum of $f(\xi)g(\eta)$ over all 
$\xi,\eta\in R$ such that $\mathrm{pr}_1(\xi)=x$, $\mathrm{pr}_2(\eta)=z$ and $\mathrm{pr}_2(\xi)=\mathrm{pr}_1(\eta)$.
It is then enough to show that the set
\begin{equation}\label{eq:interS}
(\mathrm{pr}_1)^{-1}(x)\cap\mathrm{supp}(f)
\end{equation}
is finite. But local homeomorphisms have discrete fibers. Thus \eqref{eq:interS} is the intersection of the closed discrete subset
$(\mathrm{pr}_1)^{-1}(x)$ of $R$ and the closed compact subset $\mathrm{supp}(f)$ of $R$ (since $R$ is Hausdorff). Thus, \eqref{eq:interS} is a discrete compact space in the topology induced from $R$, hence finite.
\end{proof}

A neutral element for the product is given by the characteristic function of the diagonal $\Delta:=\{(x,x) :x\in X\}$, which is continuous if{}f $\Delta$ is clopen in $R$. However, since the diagonal map $X\to R$ is in general not continuous, even if $X$ is compact such a characteristic function may be not compactly supported (see Example \ref{ex:finer}).

\begin{ex}\label{ex:finer}
Let $X:=\interval{-1}{1}$ and $R$ be the equivalence relation generated by $x\sim -x$. Thus, $R$ is a cross. With 
the topology induced by $X\times X$, the projection onto the first axis is not a local homeomorphism (the cross is not locally Euclidean).
On the other hand we can write $R=(R\smallsetminus\{O\})\sqcup \{O\}$, put on $R\smallsetminus\{O\}$ and $\{O\}$ the subspace topology from $\R^2$, and on $R$ the disjoint union topology. With such a topology, $R$ is {\'e}tale.

Observe that the diagonal map $X\to R$, $x\mapsto (x,x)$, is not continuous, since $\{O\}$ is closed in $R$ but $\{0\}$ is not closed in $X$.

The diagonal $\Delta$ is clopen but not compact in $R$, since $\Delta\smallsetminus\{O\}$ is closed in $\Delta$ and non-compact. Hence the characteristic function of the interval is continuous but not compactly supported, and the convolution algebra is not unital (even if $X$ is compact).
\end{ex}

\begin{ex}\label{ex:finite-dim}
Let $X$ be a discrete space and $R$ any discrete tolerance relation on $X$. For $(i,j)\in R$,
let $E_{ij}$ be the function on $R$ that is $1$ at the site $(i,j)$ and zero everywhere else.
These functions form a basis of $C_c(R)$ (compactly means finitely supported in the present case). In such a basis, the product is explicitly given by:
\begin{equation}\label{eq:matrix}
E_{ij}\star E_{kl}=\bigg\{\!\begin{array}{ll}
\delta_{jk}E_{il} & \text{if }(i,l)\in R \\
0 & \text{otherwise}
\end{array}
\end{equation}
The algebra is unital if and only if $X$ is a finite set (i.e.~the diagonal $\Delta$ is compact), with unit given by
$$
1=\sum_{i\in X}E_{ii} \;.
$$
\end{ex}

When $R$ is an equivalence relation, it is well known that the convolution product is associative, and one can complete the convolution algebra using an injective bounded \mbox{$*$-representation}. However, if $(A,\star)$ is not associative, there is no injective homomorphism $(A,\star)\to \mathcal{B}(H)$ into bounded operators on a Hilbert space $H$. There is no good notion of ``representation'' that works for arbitrary non-associative algebras.

Observe that the definition of convolution product makes sense if we replace complex-valued functions by functions with values in any field $\mathbb{K}$. However, for the sake of simplicity, we will only study the case $\mathbb{K}=\C$.

\subsection{Finite-dimensional case}\label{sec:finitedim}
It is useful to have a pictorial representation of tolerance relations. Observe that $R\subset X\times X$ is a tolerance relation if and only if $\Gamma:=(X,R\smallsetminus \Delta)$ is an undirected graph with no loops. Of course, from any undirected graph with no loops we get a tolerance relation by adding a loop to each vertex.
Thus for example the graph:
\begin{equation}\label{eq:exgraph}
\begin{tikzpicture}[baseline=(current bounding box.center)]

\node (a) at (0,1.2) {$2$};
\node (b) at (-1,0) {$1$};
\node (c) at (1,0) {$3$};

\path[-] (a) edge (b) (a) edge (c);

\end{tikzpicture}
\end{equation}
represents the relation on $X=\{1,2,3\}$ with $1\sim 2$ and $2\sim 3$, but $1\not\sim 3$ (where, as costumary, we write $i\sim j$ to mean that $(i,j)\in R$).

A tolerance relation is transitive if, whenever there is an edge between $x$ and $y$ and between $y$ and $z$, there is also an edge between $x$ and $z$. One immediately realizes by induction that:

\begin{rem}\label{rem:Rcon}
Let $X$ be a finite set. Then, a tolerance relation $R$ on $X$ is an equivalence relation if and only if each connected component of its graph is a complete graph.
\end{rem}

In the rest of this section, we will assume that $X:=\{1,\ldots,n\}$ is finite and that the topology on both $X$ and $R$ is the discrete one. The convolution product is then given by \eqref{eq:matrix} in the basis of delta functions. We will denote by $A(R)$ the algebra $C_c(R)$ with product \eqref{eq:matrix}, and call it the \emph{tolerance algebra} associated to $R$.

Observe that, if $(i,j)$ and $(k,l)$ are edges in different connected components of the graph of $R$, the product of the corresponding basis elements of $A(R)$ is zero. The algebra $A(R)$ is then a direct sum of algebras associated to connected components of the graph of $R$.
If $R$ is an equivalence relation with connected graph, which means $R=X\times X$ by Remark \ref{rem:Rcon},
if we identify $E_{ij}$ with the $n\times n$ matrix with $1$ in position $(i,j)$ and zero everywhere else,
we see that \eqref{eq:matrix} is just the usual matrix product, the involution \eqref{eq:invgroupoid} is the usual Hermitian conjugation, and $A(R)$ is $*$-isomorphic to $M_n(\C)$. In general,
if $R$ is an equivalence relation, $A(R)$  is isomorphic to a direct sum of matrix algebras (as it is well-known).

\begin{ex}\label{ex:one}
Let us denote by $\mathfrak{A}_3$ the tolerance algebra of the relation \eqref{eq:exgraph}. We think of its elements as matrices
$a=(a_{ij})\in M_3(\C)$ with $a_{13}=a_{31}=0$, and the product of two such elements is the matrix product composed with the natural projection $M_3(\C)\to \mathfrak{A}_3$ (which kills the $(1,3)$ and $(3,1)$ matrix elements).

The algebra $\mathfrak{A}_3$ is not power associative (which in particular means that it is neither commutative nor associative).  Indeed, if
$$
a:=\begin{bmatrix}
0 & 1 & 0 \\ 1 & 0 & 1 \\ 0 & 1 & 0
\end{bmatrix} ,
$$
then
$$
(a\star a)\star a=\begin{bmatrix}
0 & 1 & 0 \\ 2 & 0 & 2 \\ 0 & 1 & 0
\end{bmatrix}\neq
a\star (a\star a)=\begin{bmatrix}
0 & 2 & 0 \\ 1 & 0 & 1 \\ 0 & 2 & 0
\end{bmatrix} .
$$
\end{ex}

\begin{lemma}\label{lemma:subalgebra}
Let $R$ be a tolerance relation on $X=\{1,\ldots,n\}$.
Then, $R$ is an equivalence relation if and only if $A(R)$ has no subalgebra isomorphic to $\mathfrak{A}_3$.
\end{lemma}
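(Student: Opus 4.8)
The plan is to establish the two implications separately: for one direction use that associativity passes to subalgebras, and for the other give an explicit construction of an $\mathfrak{A}_3$-subalgebra.

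$(\Rightarrow)$ Argue by contraposition. If $R$ is an equivalence relation on the finite set $X$, then, as recalled above, $A(R)$ is isomorphic to a direct sum of full matrix algebras; in particular it is associative, and hence so is every subalgebra of it. Since $\mathfrak{A}_3$ is not associative by Example \ref{ex:one}, no subalgebra of $A(R)$ can be isomorphic to $\mathfrak{A}_3$.

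$(\Leftarrow)$ Again by contraposition, assume $R$ is \emph{not} an equivalence relation. First I record a general observation: for any subset $Y\subseteq X$, formula \eqref{eq:matrix} shows at once that the linear span of $\{E_{ab}:(a,b)\in R,\ a,b\in Y\}$ is a subalgebra of $A(R)$, canonically isomorphic to the tolerance algebra $A(R|_Y)$ of the restricted relation $R|_Y=R\cap(Y\times Y)$ — indeed the product of two such $E_{ab}$ never involves a pair outside $Y\times Y$, and by \eqref{eq:matrix} it lands in $R|_Y$ exactly when it lands in $R$, so the index set and the structure constants coincide with those of $A(R|_Y)$. It therefore suffices to exhibit a three-element subset $Y=\{i,j,k\}\subseteq X$ on which $R$ restricts, after relabeling, to the relation \eqref{eq:exgraph}, i.e. with $i,j,k$ pairwise distinct, $i\sim j$, $j\sim k$ but $i\not\sim k$; for then $A(R|_Y)\cong\mathfrak{A}_3$ by definition of $\mathfrak{A}_3$ (Example \ref{ex:one}).

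Such a triple exists precisely because $R$ is not an equivalence relation: by Remark \ref{rem:Rcon} the graph $\Gamma=(X,R\smallsetminus\Delta)$ has a connected component that is not a complete graph, hence contains two distinct non-adjacent vertices. Choosing a shortest path between them — which has length at least $2$ — its first three vertices $i,j,k$ are pairwise distinct (a shortest path has no repeated vertices), satisfy $i\sim j$ and $j\sim k$, and have $i\not\sim k$: an edge between $i$ and $k$ would either yield a strictly shorter path or, when $k$ is the terminal vertex, contradict the non-adjacency of the two chosen endpoints. This completes the construction and the proof.

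The only genuine computation is the ``general observation'' on restricted tolerance algebras, which is a direct unwinding of \eqref{eq:matrix}; I anticipate no real obstacle beyond this bookkeeping, since the whole content is the matching of the single defining relation of $\mathfrak{A}_3$ (the vanishing of the $(1,3)$ and $(3,1)$ entries) with the single non-adjacency $i\not\sim k$ among the three chosen vertices.
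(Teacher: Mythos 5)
Your proposal is correct and follows essentially the same route as the paper: the forward direction via associativity of $A(R)$ versus non-associativity of $\mathfrak{A}_3$, and the converse by exhibiting a triple $i\sim j$, $j\sim k$, $i\not\sim k$ whose associated span of matrix units is a subalgebra isomorphic to $\mathfrak{A}_3$. Your restriction-to-a-subset observation and the shortest-path argument are valid but more elaborate than needed --- failure of transitivity directly hands you three elements with the required adjacencies, and reflexivity and symmetry force them to be distinct, which is all the paper uses.
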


\begin{proof}
If $R$ is an equivalence relation, then $A(R)$ cannot have a subalgebra isomorphic to $\mathfrak{A}_3$, since the former algebra is associative and the latter is not.
If $R$ is \emph{not} an equivalence relation, then there exists three distinct elements $x,y,z\in X$ such that $x\sim y$ and $y\sim z$ but $x\not\sim z$; the vector subspace of $A(R)$ spanned by the elements
$E_{xx},E_{yy},E_{zz},E_{xy},E_{yz},E_{yx},E_{zy}$ is a subalgebra isomorphic to $\mathfrak{A}_3$, with isomophism given by (with an abuse of notations): $x\to 1$, $y\to 2$, $z\to 3$.
\end{proof}

\begin{prop}\label{prop:Rpowerass}
Let $R$ be a tolerance relation on $\{1,\ldots,n\}$ and $A(R)$ the tolerance algebra of $R$. The following properties are equivalent:
\begin{itemize}
\item[(i)] $A(R)$ is associative;
\item[(ii)] $A(R)$ is power associative;
\item[(iii)] $R$ is an equivalence relation.
\end{itemize}
\end{prop}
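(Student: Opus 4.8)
The plan is to prove the chain of implications (i)$\Rightarrow$(ii)$\Rightarrow$(iii)$\Rightarrow$(i), exploiting the results already established. The implication (i)$\Rightarrow$(ii) is immediate: every associative algebra is power associative, since all parenthesizations of a product of copies of a single element coincide. The implication (iii)$\Rightarrow$(i) is exactly the classical statement recalled just before Example~\ref{ex:one}: if $R$ is an equivalence relation, then $A(R)$ is a direct sum of matrix algebras and in particular associative. So the only implication with real content is (ii)$\Rightarrow$(iii), or equivalently its contrapositive: if $R$ is \emph{not} an equivalence relation, then $A(R)$ is \emph{not} power associative.

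For the contrapositive I would combine Lemma~\ref{lemma:subalgebra} with Example~\ref{ex:one}. If $R$ fails to be an equivalence relation, Lemma~\ref{lemma:subalgebra} (or rather the construction in its proof) produces three distinct elements $x,y,z\in X$ with $x\sim y$, $y\sim z$, $x\not\sim z$, and exhibits a subalgebra $B\subset A(R)$, spanned by $E_{xx},E_{yy},E_{zz},E_{xy},E_{yz},E_{yx},E_{zy}$, which is isomorphic to $\mathfrak{A}_3$ via $x\mapsto 1$, $y\mapsto 2$, $z\mapsto 3$. Example~\ref{ex:one} shows that $\mathfrak{A}_3$ is not power associative: the element $a=E_{12}+E_{21}+E_{23}+E_{32}$ (in the $\mathfrak{A}_3$ notation) satisfies $(a\star a)\star a\neq a\star(a\star a)$. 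Transporting $a$ back along the isomorphism gives an element $b=E_{xy}+E_{yx}+E_{yz}+E_{zy}\in B\subset A(R)$ with $(b\star b)\star b\neq b\star(b\star b)$. Since power associativity is inherited by subalgebras (it is a condition on individual elements), $A(R)$ is not power associative, which is what we wanted.

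I would then remark that this also settles the parenthetical claims elsewhere: non-power-associativity of $\mathfrak{A}_3$ in particular forbids commutativity and associativity, consistent with the statement in Example~\ref{ex:one}. The only point requiring a line of care is that in the definition of the subalgebra $B$ one must check it is genuinely closed under $\star$ — but this is already implicit in Lemma~\ref{lemma:subalgebra}, and follows directly from the multiplication rule \eqref{eq:matrix} together with the fact that $x\not\sim z$ kills any product that would land on $E_{xz}$ or $E_{zx}$; the remaining products stay within the seven listed basis vectors. No genuine obstacle arises here; the proposition is essentially a bookkeeping corollary of Lemma~\ref{lemma:subalgebra} and the explicit computation in Example~\ref{ex:one}, and the only mild subtlety is organizing the three implications so that the non-trivial direction (ii)$\Rightarrow$(iii) is reduced cleanly to material already in hand.
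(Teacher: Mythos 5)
Your proof is correct and follows essentially the same route as the paper: the authors also note that (iii)\,$\Rightarrow$\,(i)\,$\Rightarrow$\,(ii) are immediate and prove (ii)\,$\Rightarrow$\,(iii) by contraposition via Lemma~\ref{lemma:subalgebra} and the failure of power associativity of $\mathfrak{A}_3$ exhibited in Example~\ref{ex:one}. Your additional checks (closure of the spanned subspace under $\star$, inheritance of power associativity by subalgebras) are fine but are left implicit in the paper.
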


\begin{proof}
If $R$ is not an equivalence relation, then $A(R)$ contains a subalgebra isomorphic to $\mathfrak{A}_3$ (Lemma \ref{lemma:subalgebra}). Since $\mathfrak{A}_3$ is not power associative, $A(R)$ cannot be power associative. Thus 
(ii) $\Rightarrow$ (iii). The implications (iii) $\Rightarrow$ (i) $\Rightarrow$ (ii) are obvious.
\end{proof}

It follows from the previous proposition that an algebra that is power associative but not associative cannot be obtained from the above construction.
Thus, for example, algebras obtained from $\R$ by a iterated Cayley--Dickson construction with dimension $\geq 8$ (octonion, sedenions, etc.) are not tolerance algebras. The same is true for split octionions.
Division algebras (such as quaternions) also cannot be obtained from the above construction: since $E_{ii}\star E_{jj}=0$ for all $i,j\in X$ with $i\neq j$, the algebra $A(R)$ is a division algebra if and only if $X$ is a singleton, i.e.~$A(R)=\C$.

\subsection{Magmas acting on sets}\label{sec:magmas}

Let $G$ be a set with a binary operation $\ast:G\times G\to G$ (the ``multiplication''), a unary operation $(-)^{-1}:G\to G$ (the ``right inversion''), and an element $1\in G$ (the ``right inverse''), such that:
\begin{equation}\label{eq:magma}
(g\ast h)\ast h^{-1}=g\ast 1=g\qquad\forall\;g,h\in G.
\end{equation}
We call $G$ a magma with a right inverse and unit.

\begin{ex}\label{ex:div}
Consider the set $\R\smallsetminus\{0\}$ with operation $x\ast y:=x/y$ and with standard inversion and unit. Observe that
$(x\ast y)\ast y^{-1}=xy^{-1}y=x$ even if $y\ast y^{-1}$ is not $1$.
\end{ex}

A right \emph{action} of $G$ on a set $X$ is a map $X\times G\to X$, $(x,g)\mapsto x\triangleleft g$, such that:
\begin{equation}\label{eq:onlyassume}
(x\triangleleft g)\triangleleft g^{-1}=x\triangleleft 1=x\qquad\forall\;g\in G,x\in X.
\end{equation}
Given such an action, the image $R(G,X)$ of the canonical map:
\begin{equation}\label{eq:Galois}
X\times G\to X\times X , \qquad (x,g)\mapsto (x,x\triangleleft g) ,
\end{equation}
is a tolerance relation. Indeed, $(x,x)=(x,x\triangleleft 1)\in R(G,X)$ for all $x\in X$, and if $(x,y)=(x,x\triangleleft g)\in R(G,X)$, then $(y,x)=(y,y\triangleleft g^{-1})\in R(G,X)$ as well.

The action $\triangleleft$ is called \emph{free} if \eqref{eq:Galois} is injective, \emph{transitive} if \eqref{eq:Galois} is surjective.

\begin{rem}\label{rem:magma}
The action of $G$ on itself by right multiplication is free if and only if $G$ is left-cancellative, i.e.~for all $x,g,h\in G$:
$$
x\ast g=x\ast h \quad\Longrightarrow\quad g=h .
$$
If $g^{-1}\ast (g\ast h)=h$ for all $g,h\in G$, then the action is both free and transitive. In Example \ref{ex:div}, the right action of $G$ on itself is both free and transitive.
\end{rem}

\begin{ex}
If $G$ is a Moufang loop \cite{Mou35}, the inverse properties of Moufang loops read: $g^{-1}\ast (g\ast h)=h=(h\ast g)\ast g^{-1}$ for all $g,h\in G$. They guarantee that $G$ satisfies \eqref{eq:magma} and that the right action of $G$ on itself is both free and transitive.
\end{ex}

If $G$ and $X$ are topological space and $G$ acts freely on $X$, the bijection $X\times G\to R(G,X)$
given by \eqref{eq:Galois} defines a topology on $R$ that we will call \emph{standard} (and in general is not the subspace topology from $X\times X$).
The projection $X\times G\to X$ onto the first component is a local homeomorphism if and only if $G$ is discrete.\footnote{Observe that no continuity assumption on the operations of $G$ is needed. When $G$ is discrete, they will automatically be continuous but the action on $X$ might still not be continuous.}
Thus:

\begin{rem}
Let $G$ and $X$ be topological spaces with $G$ acting freely on $X$. The tolerance relation $R(G,X)$ is {\'e}tale if and only if $G$ is discrete.
\end{rem}

A celebrated example where $G$ is a group and its action is a group action (in the standard sense) is the following.

\begin{ex}
Let $G:=\Z$ with action on $X:=\mathbb{S}^1:=\R/\Z$ by translations of a multiple of a fixed $\theta\in\R\smallsetminus\mathbb Q$. The associated relation $R(G,X)$ is then given by $x\sim y\iff x-y\in \theta\,\Z$. Since $\theta$ is irrational, the action is free and $R(G,X)$ with the topology of $\mathbb{S}^1\times\Z$ is {\'e}tale.
Let us identify $R$ with $\mathbb{S}^1\times\Z$. Every $f\in C_c(R)$ is a finite sum
\begin{equation}\label{eq:finitesum}
f=\sum\nolimits_jf_jV^j
\end{equation}
where $V^j$ is the delta function $V^j(k)=\delta_{j,k}$ and $f_j\in C(\mathbb{S}^1)$. The convolution product \eqref{eq:prodgroupoid} of two elements $fV^j$ and $gV^k$ is
\begin{equation}\label{eq:thetaprod}
fV^j\star gV^k(x,n)=f(x)g(x+j\theta)\delta_{j+k,n} \;.
\end{equation}
In particular $V^j\star V^k=V^{j+k}$, thus justifying the notations.

Let $U\in C(\mathbb{S}^1)$ be the function $U(x)=e^{2\pi\mathrm{i} x}$. One easily checks that
$$
V\star U=e^{2\pi\mathrm{i}\theta}U\star V.
$$
We recognize a dense $*$-subalgebra of the C*-algebra of the noncommutative torus \cite{Rie81}.
\end{ex}

One can construct many examples where $G$ is not a group. If $X\simeq S^7$ is the set of unit octonions (hence a Moufang loop) and $G\subset X$ a discrete subset containing $1$ and closed under inversion, then right multiplication of $X$ by $G$ defines a free action. In particular, $G$ can be a sub-loop, such as the loop of integer vectors in $S^7\subset\R^8$, or the set of basis vectors and their additive inverses, which are both Moufang loops of finite order, so that the associated tolerance relation is {\'e}tale.

In general, if $K$ is any group with a free right action $\triangleleft$ on a set $X$ and $G\subset K$ is a subset containing $1$ and closed under inversion, then the restriction of $\triangleleft $ to $G$ is a free action in our more general sense, and the associated relation may be non-trivial (not transitive) even if the multiplication in $K$ is associative. For example, if $K=S_3$ is the group of permutations of $X=\{1,2,3\}$ and $G\subset S_3$ is the subset containing the identity and the transpositions $1\leftrightarrow 2$ and $2\leftrightarrow 3$, then the associated relation is the one with graph \eqref{eq:exgraph}.

\subsection{Truncations}\label{sec:4}

It is a common attitude in Noncommutative Geometry to regard the Dirac operator of a spectral triple $(A,H,D)$ as a kind of inverse of the line element of Riemannian geometry:
$$
D\sim ds^{-1} .
$$
The precise meaning of this statement is explained for example in \cite{Con94}. Given a spectral triple, for example the one canonically associated to a Riemannian spin manifold $M$, one can imagine to implement a cut-off on large energies (small distances) by using the spectral projection $P_\Lambda:=\chi_{[-\Lambda,\Lambda]}(D)$, with $\Lambda>0$.
One can then replace the original spectral triple by the truncated one $(P_\Lambda AP_\Lambda,P_\Lambda H,P_\Lambda DP_\Lambda)$ and think of it as some kind of ``coarse graining'' of the manifold we started from. If $D$ has compact resolvent, e.g.~if the spectral triple is unital, the truncations will be finite-dimensional. In the example of a compact Riemannian spin manifold $M$, one obtains matrix geometries ``approximating'' $M$ (see, e.g., \cite{CvS20,DLM14}).

The issue of convergence of matrix geometries in the case of coadjoint orbits was studied in several seminal papers by Rieffel, see e.g.~the most recent one \cite{Rie21} and references therein. The metric convergence of truncations in the $\Lambda\to\infty$ limit were recently studied in \cite{vS21}: a sufficient condition, there, for convergence is the existence of a ``$C^1$-approximate order isomorphism'', see \cite[Def.~2]{vS21}. 
In the case of a compact Riemannian spin manifold $M$, when $A=C^\infty(M)$ and $D$ is the Dirac operator of the spin structure acting on the Hilbert space $H$ of $L^2$-spinors, it was shown in \cite{GS21} that one can choose a suitable set of ``localized'' states on each $P_\Lambda AP_\Lambda$ forming a sequence of metric spaces (equipped with the distance induced by the truncated Dirac operator) that converge to $M$ in the Gromov-Hausdorff sense.
A numerical study through computer simulations is in \cite{GS20}. The question of convergence of the full state space was solved in \cite{vS21} in the case of the circle, and is still open for a general (compact) Riemannian spin manifold.

A spectral projection as above defines a linear map $A\to P_\Lambda AP_\Lambda$ that allows one to put a non-associative product on its image, and is a special case of the following natural construction.

Let $B$ be a C*-algebra and $T:B\to B$ a linear map (our ``cut-off'' operator).
On $A:=\mathrm{Im}(T)$ we can define a product $\star$ by
\begin{equation}\label{eq:truncatedprod}
a\star b:=T(ab) \;\forall\;a,b\in A,
\end{equation}
where the one on the right hand side is the product of $B$, and get a (possibly non associative) algebra $(A,\star)$.
If $T(a^*)=T(a)^*$ for all $a\in A$, then $(A,\star)$ is a $*$-algebra.
If $B$ is commutative, then $(A,\star)$ is commutative (and then power associative).
We will call $(A,\star)$ a ``truncation'' of $B$.

Let us observe that for a truncation as above one has two notions of positivity: we could call an element $a\in A$ \emph{positive} if there exists a finite set of elements $b_1,\ldots,b_k\in A$ such that $a=\sum_{i=1}^kb_i\star b_i^*$, or we could call it positive if it is a positive element of the C*-algebra $B$. As shown in the next examples, these two notions do not always coincide.
If $T:B\to B$ is a \emph{positive} map, then $b\star b^*=T(bb^*)$ is positive in $B$ for all $b\in A$ and the former notion of positivity implies the latter.

A characteristic property of a truncation map defined by a ``sandwich'' with a spectral projection --- our motivating example --- is that it is \emph{idempotent}. That is:
\begin{equation}\label{eq:idempotent}
T(T(b))=T(b) \qquad\forall\;b\in B.
\end{equation}
From property \eqref{eq:idempotent} it follows that, in such a case, $B=A\oplus\ker T$.

Theorem II.6.10.11 of \cite{Bla06} provides a sufficient condition for \eqref{eq:truncatedprod} to be associative: if $T:B\to B$ is idempotent and a completely positive\footnote{Here completely positive means that the map \mbox{$\mathrm{id}\otimes T:M_k(\C)\otimes B\to M_k(\C)\otimes B$} is positive for all $k\geq 1$.} contraction, then $\star$ is associative and makes $A$ into a C*-algebra with its involution and with the norm inherited from $B$ (even if it is not, in general, a C*-subalgebra of $B$).

\begin{ex}\label{ex:14}
Let $B$ be a C*-algebra and $P\in B$ a projection. Then, the map $T:B\to B$ defined by $T(b):=PbP$ is a completely positive idempotent contraction. Observe, however, that in such a case associativity of \eqref{eq:truncatedprod} can be proved by a direct computation. Indeed, for every $a_1=T(b_1)$ and $a_2=T(b_2)$:
$$
a_1\star a_2=P(Pb_1P)(Pb_2P)P=(Pb_1P)(Pb_2P)=a_1a_2 .
$$
Thus, $\star$ is the restriction of the product of $B$
to the C*-subalgebra $A:=\mathrm{Im}(T)$. (See also Example 3.3 of \cite{DLM14}.)
\end{ex}

Example \ref{ex:14} is a special case of the following notion.
Let $A$ be a C*-subalgebra of a C*-algebra $B$. A linear map $T:B\to B$ with $\mathrm{Im}(T)=A$
is called a \emph{conditional expectation} from $B$ to $A$ if one of the two equivalent conditions is satisfied:
\begin{itemize}
\item[(i)] $T$ is idempotent with norm $1$;
\item[(ii)] $T$ is positive, idempotent and an $A$-bimodule map.
\end{itemize}
Both (i) and (ii) imply that $T$ is a completely positive contraction (see \cite[\S II.6.10]{Bla06}).
However, the setting is less general than that of \cite[Theorem II.6.10.11]{Bla06} since here we assume that $A$ is a C*-subalgebra of $B$, while the above mentioned theorem guarantees that $\star$ is associative, but it is not necessarily the restriction to $A$ of the product in $B$.
 
In the next examples, the map $T$ fails to be either positive or idempotent.

\begin{ex}\label{ex:previous}
Let $n\geq 2$ be fixed, $X:=\mathbb{S}^1$, let $u$ be the unitary generator of $B:=C(X)$, given by $u(t):=e^{\mathrm{i}t}$, and define:
\begin{equation}\label{eq:fourierpartial}
T(f):=\sum_{k=-n+1}^{n-1}u^k\widehat{f}(k)
=\frac{1}{2\pi}\sum_{k=-n+1}^{n-1}u^k\int_{0}^{2\pi}u^{-k}(t)f(t)dt .
\end{equation}
Observe that, in this example, the product \eqref{eq:truncatedprod} is power associative but not associative:
\mbox{$(u^*\star u^{n-1})\star u=u^{n-1}$} while
\mbox{$u^*\star (u^{n-1}\star u)=0$.} Moreover, in spite of power associativity, in general $f^{\star k}\neq T(f^k)$.
For $f:=u^{-n+1}+u^{n-1}\in A$, for example, one has
$$
T(f^3)=3f \neq f^{\star 3}= 2f \;.
$$
The map \eqref{eq:fourierpartial} satisfies \eqref{eq:idempotent}, but not positivity.
Take for example $n=2$ and $f=u+u^*+1$. Then $T(f^2)=3+2(u+u^*)$,
i.e.~$T(f^2)(t)=3+4\cos t$,  proving that the function $T(f^2)$ is not everywhere greater or equal than zero.
\end{ex}

Since the Fouries series of a continuous function on $\mathbb S^1$ may be divergent, a smart idea is to replace
the partial sum of the Fourier series \eqref{eq:fourierpartial} by the corresponding $n$-th Ces{\`a}ro sum. As shown in the next example, we loose the property \eqref{eq:idempotent} but gain positivity.

\begin{ex}\label{ex:opsystwo}
Let $n\geq 2$ be fixed, $X:=\mathbb{S}^1$, let $u$ be the unitary generator of $B:=C(X)$, given by $u(t):=e^{\mathrm{i}t}$, and define:
\begin{equation}\label{eq:Cesaro}
T(f):=\frac{1}{n}\sum_{j=1}^n\sum_{k=-j+1}^{j-1}u^k\widehat{f}(k) .
\end{equation}
From
$$
T(u^k)=\begin{cases}
\frac{n-|k|}{n}u^k & \text{if }|k|<n \\
0 & \text{if }|k|\geq n
\end{cases}
$$
one sees that the maps in \eqref{eq:fourierpartial} and \eqref{eq:Cesaro} have the same range, and that the latter is not idempotent. From the well-known formula in terms of Fej\'er kernel:
$$
T(f)(t)=\frac{1}{2\pi n}\int_{\mathbb S^1}f(t-\tau) \frac{\sin^2(n\tau/2)}{\sin^2(\tau/2)} d\tau ,
$$
it is evident that if $f$ is non-negative, then $T(f)$ is a non-negative function as well.

The product defined by the map \eqref{eq:Cesaro} is non-associative, as one can check, and
in general $f^{\star k}\neq T(f^k)$. For example if $f:=u^{-n+1}+u^{n-1}$, then
$T(f^3)=\frac{3}{n}f \neq f^{\star 3}= \frac{2}{n}f$.
\end{ex}

An interesting although simple observation is that every finite-dimensional tolerance algebra is a truncation of a matrix algebra.

\begin{ex}\label{ex:tol12}
Let $R$ be a tolerance relation on $X=\{1,\ldots,n\}$ and $T:M_n(\C)\to M_n(\C)$ be the map:
\begin{equation}\label{eq:trunmapfin}
T(b)=\sum_{(i,j)\in R}E_{ii}bE_{jj} \qquad\forall\;b\in B.
\end{equation}
Then, the product \eqref{eq:truncatedprod} coincides with the convolution product \eqref{eq:matrix} and $(A,\star)=A(R)$ is the tolerance algebra of \S\ref{sec:finitedim}.

Observe that such a $T$ is idempotent but, in general, not positive. Let us start with $n=3$ and $R$ the relation \eqref{eq:exgraph}. If:
$$
a:=\begin{bmatrix}
0 & 1 & 0 \\ 0 & 1 & 0 \\ 0 & 1 & 0
\end{bmatrix}, \text{ then \ }
a\star a^*=T(aa^*)=\begin{bmatrix}
1 & 1 & 0 \\ 1 & 1 & 1 \\ 0 & 1 & 1
\end{bmatrix}
$$
has determinant $-1$, hence $T(aa^*)$ is not a positive semidefinite matrix and $T:M_3(\C)\to M_3(\C)$ is not a positive map.

For a general tolerance relation $R$, arguing as in the proof of Lemma \ref{lemma:subalgebra}, one can show that $T$ is positive if and only if $R$ is an equivalence relation.

The vector space underlying the algebra $A(R)$ is called in \cite{KW12} the \emph{quantum relation} on $M_n(\C)$ associated to the relation $R$. See Sect.~1 and in particular Proposition 1.4 of \cite{KW12}.
\end{ex}

\section{State spaces}\label{sec:5}
\subsection{Operator systems}
Let $H$ be a complex Hilbert space. A concrete unital \emph{operator system} is a complex vector subspace $A\subset\mathcal{B}(H)$ containing the identity operator $1$ of $\mathcal{B}(H)$ and the adjoint of all its elements: $a^*\in A$ for all $a\in A$ (we will only consider \emph{unital} operator systems). This is very close to the notion of concrete \emph{order unit space}, which is a real vector subspace of the set of selfadjoint operators on $H$ containing the identity.
We refer to \cite{CvS20} for a recollection of properties of operator systems and their use in Noncommutative Geometry, and to \cite{Rie04} for order unit spaces and their use in the theory of compact quantum metric spaces.

If $P_n$ is the spectral projection of the Dirac operator $D$ of $\mathbb S^1$ on eigenspaces with eigenvalue $1\leq\lambda\leq n$, the truncation $C(\mathbb S^1)^{(n)}:=P_nC(\mathbb S^1)P_n$ is the operator system of $n\times n$ Toeplitz matrices ($n\times n$ matrices that are constant on descending diagonals) studied in \cite{CvS20,vS21}. A \emph{dual} operator system $C(\mathbb S^1)_{(n)}$ is given by the truncation of $C(\mathbb S^1)$ in Examples \ref{ex:previous}-\ref{ex:opsystwo} (see \cite{CvS20} for the exact meaning of operator system duality, and \cite{Far21} for the proof that in the example above there is a unital complete order isomorphism of operator systems).

The complex vector space underlying the algebra $A(R)$ in \S\ref{sec:finitedim} is obviously a finite-dimensional operator system. Note, however, that not every finite-dimensional operator system is of this form. The vector space $A(R)$ always contains all diagonal matrices, so for example, for $n\geq 2$, the operator system $C(\mathbb S^1)^{(n)}$ does not come from a tolerance relation.

If $A$ is an operator system, there is a notion of \emph{state} very much as for C*-algebras, as a positive bounded functional $\varphi:A\to\C$ satisfying $\varphi(1)=1$. The set of all states, denoted by $\mathcal{S}(A)$, is a convex subset of the Banach dual of $A$, compact in the weak* topology; its extremal points are called \emph{pure states}.
The set of all pure states of $A$ is denoted by $\mathcal{P}(A)$.

Even if we will not deal with convergence problems here, it is worth mentioning 
the result in \cite{vS21} that the state spaces of both operator systems $C(\mathbb S^1)^{(n)}$ and $C(\mathbb S^1)_{(n)}$, with the obvious metric induced by the Dirac operator of $\mathbb S^1$, converge to $\mathcal{S}(C(\mathbb S^1))$ in the Gromov-Hausdorff distance for $n\to\infty$.
Surprisingly, it was shown in \cite{Hec21} that the pure state spaces
$\mathcal{P}(C(\mathbb S^1)^{(n)})$, with metric induced by the Dirac operator of $\mathbb S^1$, converge to
$\mathcal{S}(C(\mathbb S^1))$ and not to $\mathcal{P}(C(\mathbb S^1))\simeq\mathbb S^1$ as one would have guessed.

Suppose, now, that $B$ is a C*-algebra and $A\subset B$ an operator system.
By Hahn-Banach theorem every state of $A$ can be extended to a state of $B$, and we get a surjective map
\begin{equation}\label{eq:F}
F:\mathcal{S}(B)\twoheadrightarrow\mathcal{S}(A)
\end{equation}
given by the restriction of states of $B$ to $A$.
The first two points in the next lemma,  that we prove just for the sake of completeness, appeared already in \cite[Fact 2.9]{CvS20}.

\begin{lemma}\label{lemma:previous}
Let $F$ be the map \eqref{eq:F} and $\varphi\in\mathcal{S}(A)$. Then:\vspace*{-3pt}
\begin{itemize}\itemsep=2pt
\item[(i)] The fiber $F^{-1}(\varphi)$ is a convex subset of $\mathcal{S}(B)$.
\item[(ii)] If $\varphi$ is pure, then extremal points of $F^{-1}(\varphi)$ are pure states of $B$.
\item[(iii)] If $F^{-1}(\varphi)=\{\psi\}$ is a singleton and $\psi\in\mathcal{P}(B)$, then $\varphi\in\mathcal{P}(A)$.
\end{itemize}
\end{lemma}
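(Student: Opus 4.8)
The plan is to prove each of the three statements directly from the definitions, using only elementary facts about convex sets and the restriction map $F$.

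For (i), I would take $\psi_1,\psi_2\in F^{-1}(\varphi)$ and $t\in[0,1]$, and check that $\psi_t:=t\psi_1+(1-t)\psi_2$ is again a state of $B$ — this is immediate since $\mathcal{S}(B)$ is convex — and that $F(\psi_t)=\varphi$, which follows because $F$ is linear (it is just restriction of functionals). So the fiber is the intersection of the convex set $\mathcal{S}(B)$ with the affine subspace $F^{-1}(\varphi)$, hence convex.

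For (ii), suppose $\varphi$ is pure and let $\psi$ be an extremal point of $F^{-1}(\varphi)$; I want to show $\psi$ is extremal in $\mathcal{S}(B)$. Assume $\psi=\tfrac12(\psi_1+\psi_2)$ with $\psi_1,\psi_2\in\mathcal{S}(B)$. Applying $F$ and using linearity gives $\varphi=\tfrac12(F(\psi_1)+F(\psi_2))$; since $\varphi$ is pure this forces $F(\psi_1)=F(\psi_2)=\varphi$, so $\psi_1,\psi_2\in F^{-1}(\varphi)$. But then extremality of $\psi$ \emph{inside} $F^{-1}(\varphi)$ forces $\psi_1=\psi_2=\psi$. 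Hence $\psi\in\mathcal{P}(B)$.

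For (iii), assume $F^{-1}(\varphi)=\{\psi\}$ with $\psi\in\mathcal{P}(B)$, and suppose $\varphi=\tfrac12(\varphi_1+\varphi_2)$ with $\varphi_1,\varphi_2\in\mathcal{S}(A)$. By surjectivity of $F$ (which holds by Hahn--Banach, as noted before the lemma), pick $\psi_i\in F^{-1}(\varphi_i)$ for $i=1,2$. Then $\tfrac12(\psi_1+\psi_2)$ is a state of $B$ restricting to $\tfrac12(\varphi_1+\varphi_2)=\varphi$, so it lies in $F^{-1}(\varphi)=\{\psi\}$, i.e.\ $\psi=\tfrac12(\psi_1+\psi_2)$. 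Purity of $\psi$ gives $\psi_1=\psi_2=\psi$, whence $\varphi_1=F(\psi_1)=F(\psi)=\varphi=F(\psi_2)=\varphi_2$. Thus $\varphi$ is extremal, i.e.\ $\varphi\in\mathcal{P}(A)$.

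There is no real obstacle here; the only mild subtlety is that in (ii) one should note that extremality in a convex subset $C\subseteq\mathcal{S}(B)$ a priori allows the decomposing states to lie outside $C$, which is exactly why the first step (pushing the decomposition down via $F$ and using purity of $\varphi$ to land back in the fiber) is needed. Everything else is a routine manipulation of convex combinations and the linearity and surjectivity of $F$.
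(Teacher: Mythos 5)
Your proof is correct and follows essentially the same route as the paper's: convexity of the fiber by linearity of restriction, pushing a decomposition of $\psi$ down through $F$ and using purity of $\varphi$ to land back in the fiber for (ii), and lifting a decomposition of $\varphi$ via surjectivity of $F$ into the singleton fiber for (iii). The only cosmetic difference is that you phrase extremality via midpoints rather than general convex combinations, which is equivalent here.
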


\begin{proof}
(i) Let $f,g\in F^{-1}(\varphi)$.
Then for any $\lambda\in\interval{0}{1}$ the combination
\begin{equation}\label{eq:decompose}
\psi:=\lambda f+(1-\lambda)g
\end{equation}
is still a state of $B$ and $F(\psi)=\lambda \varphi+(1-\lambda)\varphi=\varphi$, so that $\psi\in F^{-1}(\varphi)$.

(ii) Let now $\varphi$ be a pure state and $\psi$ an extremal point of $F^{-1}(\varphi)$. Suppose, by contradiction, that there exists $\lambda\in\ointerval{0}{1}$ and $f,g\in\mathcal{S}(B)$ such that $\psi$ can be decomposed like in \eqref{eq:decompose}. Then
$$
\varphi=F(\psi)=\lambda F(f)+(1-\lambda)F(g) .
$$
But $\varphi$ is pure, so it must be $F(f)=F(g)=\varphi$, that means $f,g\in F^{-1}(\varphi)$, contradicting the hypothesis that $\psi$ is an extremal point of $F^{-1}(\varphi)$.

(iii) Let $\varphi=\lambda\varphi_1+(1-\lambda)\varphi_2$ with $\lambda\in\interval{0}{1}$ and $\varphi_1,\varphi_2,\in\mathcal{S}(A)$.  Choose any $\psi_1\in F^{-1}(\varphi_1)$ and $\psi_2\in F^{-1}(\varphi_2)$ (recall that $F$ is surjective). Observe that
$$
\lambda\psi_1+(1-\lambda)\psi_2\in F^{-1}(\varphi) ,
$$
which implies $\psi=\lambda\psi_1+(1-\lambda)\psi_2$. But $\psi$ is pure, hence $\lambda=0$ or $\lambda=1$. This implies that $\varphi$ is pure as well.
\end{proof}

A natural question concerns injectivity of \eqref{eq:F}, or at least injectivity of its restriction to pure states. We will see a natural example of a convex-linear map whose restriction to pure states is injective in \S\ref{sec:6}, when discussing informationally complete positive operator valued measures (IC POVM).

In general, injectivity of a convex-linear map on extremal points does not guarantee injectivity on the full domain, as shown next.

\begin{ex}
Let $\Delta_3$ be the standard simplex
$$
\Delta_3:=\left\{p=(p_1,\ldots,p_4)\in\R^4:p_i\geq 0\;\forall\;i\text{ and }\sum p_i=1\right\} .
$$
Let $F:\Delta_3\to\interval{0}{1}^2$ be the map
$$
F(p):=(p_1+p_3,p_2+p_3) \;.
$$
It gives a bijection between the vertices of the tetrahedron and the vertices of the square. Since every convex set is the closed convex hull of its extremal points, $F(\Delta_3)\supset \interval{0}{1}^2$. The opposite inclusion is obvious, thus $F(\Delta_3)=\interval{0}{1}^2$. The map is clearly not a bijection, since for example $F\left(\frac{1}{2},\frac{1}{2},0,0\right)=F\left(0,0,\frac{1}{2},\frac{1}{2}\right)$.
\end{ex}

For the operator system of a tolerance relation on $\{1,\ldots,n\}$ it is easy to verify that the map \eqref{eq:F} is not injective, not even on pure states, unless $R$ is an equivalence relation.
(This could be derived from Corollary 3.9 in \cite{CvS21} where $L$ is the adjacency matrix of the graph of $R$, but we prefer to give an independent proof.)

\begin{prop}
Let $B:=M_n(\C)$, $R$ a tolerance relation on $\{1,\ldots,n\}$ and $A=A(R)$ its tolerance algebra.
The following are equivalent:
\begin{itemize}\itemsep=2pt
\item[(i)] the map \eqref{eq:F} is injective;
\item[(ii)] the restriction of \eqref{eq:F} to pure states is injective;
\item[(iii)] $A=B=M_n(\C)$.
\end{itemize}
\end{prop}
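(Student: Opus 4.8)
The plan is to establish the cycle of implications (iii) $\Rightarrow$ (i) $\Rightarrow$ (ii) $\Rightarrow$ (iii). The first two are immediate and carry no content: if $A = M_n(\C)$ the map \eqref{eq:F} is the identity of $\mathcal{S}(M_n(\C))$, hence injective; and the restriction of an injective map to any subset — in particular to the pure states — stays injective. So the real work is the contrapositive of (ii) $\Rightarrow$ (iii): assuming $A(R) \neq M_n(\C)$, I will produce two \emph{distinct} pure states of $M_n(\C)$ sharing the same restriction to $A(R)$.

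First I would unwind the hypothesis. Since $A(R)$ is the linear span of the matrix units $E_{kl}$ with $(k,l) \in R$, having $A(R) \neq M_n(\C)$ means there is a pair $(i,j) \in (X \times X) \setminus R$, and reflexivity of $R$ forces $i \neq j$. The key point is that a vector state only ``sees'' the $R$-entries of a matrix: for a unit vector $\xi = (\xi_1,\dots,\xi_n) \in \C^n$, the pure state $\omega_\xi(b) := \langle \xi, b\,\xi \rangle$ of $M_n(\C)$ satisfies $\omega_\xi(E_{kl}) = \overline{\xi_k}\,\xi_l$, so $\omega_\xi|_{A(R)}$ is completely determined by the numbers $\overline{\xi_k}\,\xi_l$ with $(k,l) \in R$. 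Then I would simply take $\xi := \tfrac{1}{\sqrt 2}(e_i + e_j)$ and $\eta := \tfrac{1}{\sqrt 2}(e_i - e_j)$; these are orthonormal, hence not proportional, so $\omega_\xi \neq \omega_\eta$ in $\mathcal{P}(M_n(\C))$.

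It then remains to check that $\omega_\xi$ and $\omega_\eta$ agree on every basis element $E_{kl}$, $(k,l) \in R$, of $A(R)$. The coordinates of both $\xi$ and $\eta$ vanish off $\{i,j\}$, so $\omega_\xi(E_{kl}) = \omega_\eta(E_{kl}) = 0$ unless $k,l \in \{i,j\}$; for $k = l \in \{i,j\}$ both values equal $|\xi_k|^2 = |\eta_k|^2 = \tfrac{1}{2}$; and the only remaining cases $(k,l) = (i,j)$ and $(k,l) = (j,i)$ are both excluded from $R$, since $(i,j) \notin R$ and $R$ is symmetric. Hence $\omega_\xi$ and $\omega_\eta$ coincide on $A(R)$ while being different pure states of $M_n(\C)$, which shows that \eqref{eq:F} is not injective on pure states and closes the cycle. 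I do not expect any genuine obstacle: the only subtlety worth flagging is that the construction is entirely \emph{local} to the missing pair $(i,j)$, and that it is symmetry of $R$ (so that both $(i,j)$ and $(j,i)$ are absent) which leaves the off-diagonal phase on $\mathrm{span}\{e_i,e_j\}$ invisible to $A(R)$.
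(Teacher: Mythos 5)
Your proof is correct and is essentially the paper's argument: both exhibit distinct pure vector states supported on $\mathrm{span}\{e_i,e_j\}$ for a pair $(i,j)\notin R$, differing only by a relative phase that the restriction to $A(R)$ cannot detect (the paper parametrizes these by density matrices $\rho_u$ with off-diagonal entry $u$; you take $u=\pm1$). One small point in your favour: you negate (iii) correctly as ``there exists $(i,j)\notin R$ with $i\neq j$'', whereas the paper's proof literally starts from the stronger hypothesis that $R$ is not an equivalence relation (extracting a triple $x\sim y$, $y\sim z$, $x\not\sim z$ of which it only ever uses $x\not\sim z$), so your version also covers the case where $R$ is a non-trivial equivalence relation, which the paper's wording technically skips.
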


\begin{proof}
(iii) $\Rightarrow$ (i) $\Rightarrow$ (ii) is obvious.
We now prove that the negation of (iii) implies the negation of (ii).
Suppose that $R$ is not an equivalence relation, which means that there exists $x,y,z\in X$ such that $x\sim y$, $y\sim z$ and $x\not\sim z$.
We can assume $x=1,y=2,z=3$ (relabeling the elements of $X$ is equivalent to reordering the rows and columns of a matrix). For $u\in\C$ with $|u|\leq 1$, consider the state with density matrix:
$$
\rho_u=
\frac{1}{2}\left[\!\begin{array}{ccc|ccc}
1 & 0 & u & 0 & {\cdots} & 0 \\
0 & 0 & 0 & 0 & {\cdots} & 0 \\
u^* & 0 & 1 & 0 & {\cdots} & 0 \\
\hline
0 & 0 & 0 & 0 & {\cdots} & 0 \\[-2pt]
\vdots & \vdots & \vdots & \vdots & & \vdots \\
0 & 0 & 0 & 0 & {\cdots} & 0
\end{array}\!\right] \;.
$$
Since $\rho_u$ is a rank $1$ projection, the associated state $\varphi_u$, given by $\varphi_u(a)=\tr(\rho_u a)$ for all $a\in B$, is pure. Now we observe that, if $a\in A$:
$$
\varphi_u(a)=\tr(\rho_u T(a)) ,
$$
where $T$ is the truncation map in Example \ref{ex:tol12} and we used the fact that $T$ is the identity on $A$, since $T\circ T=T$. By cyclicity of the trace and symmetry of $R$:
\begin{equation}\label{eq:weuse}
\varphi_u(a)=\sum_{(i,j)\in R}\tr(\rho_u E_{ii}aE_{jj})=\sum_{(i,j)\in R}\tr(E_{ii}\rho_u E_{jj}a)=\tr(T(\rho_u)a) \;.
\end{equation}
But $T(\rho_u)$ is independent of $u$, since the truncation map kills the matrix elements in position $(1,3)$ and $(3,1)$. Thus, all states $\varphi_u$ have the same image under the map \eqref{eq:F}.
\end{proof}

\subsection{Positivity in a tolerance algebra}

As in the previous section, let $R$ be a tolerance relation on the set $\{1,\ldots,n\}$, $A=A(R)\subset M_n(\C)$ the vector space in Example \ref{ex:tol12}, $T$ the map \eqref{eq:trunmapfin}, $\star$ the product \eqref{eq:matrix}. 
There are two natural partial orders on $A(R)$, which we will denote by $\geq$ and $\succeq$, defined as follows. For $a\in A(R)$ we will write
$a\geq 0$ if $a$ is a positive semidefinite matrix, and $a\succeq 0$ if there exists $b\in M_n(\C)$ such that $b\geq 0$ and $a=T(b)$.

Observe that, since $T$ is idempotent, this in particular means that, for all $a\in A$, $a\geq 0$ implies $a\succeq 0$. On the other hand, for every element $a\in A$ of the form
\begin{equation}\label{eq:naturalnotion}
a=\sum_{i=1}^kb_i\star b_i^* ,
\end{equation}
with $b_1,\ldots,b_k\in A(R)$, one has $a=T(\sum_{i=1}^kb_ib_i^*)\succeq 0$ even if in general \eqref{eq:naturalnotion} may be not a positive semidefinite matrix (see Example \ref{ex:tol12}).

Elements \eqref{eq:naturalnotion} are the natural candidates for positive elements in a $*$-algebra. The relation with the partial order $\succeq$ is illustrated in the next proposition.

Recall that in an undirected graph $\Gamma=(V,E)$, a \emph{dominant vertex} is a vertex that is adjacent to all other vertices of the graph.

\begin{prop}\label{prop:counter}
Assume that in each connected component of the graph of $R$ there is a dominant vertex. Then, for all $a\in A(R)$, one has $a\succeq 0$ if and only if $a$ is of the form \eqref{eq:naturalnotion} for some $b_1,\ldots,b_k\in A(R)$.
\end{prop}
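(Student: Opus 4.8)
The plan is to prove both inclusions. One direction is easy: if $a=\sum_{i=1}^k b_i\star b_i^*$ with $b_i\in A(R)$, then $a=T\big(\sum_i b_ib_i^*\big)$ and $\sum_i b_ib_i^*\geq 0$ in $M_n(\C)$, so $a\succeq 0$ by definition (this holds regardless of the dominant-vertex hypothesis). The content is the converse: assuming each connected component of the graph of $R$ has a dominant vertex, I want to show that every $a$ of the form $T(b)$ with $b\geq 0$ can be rewritten as $\sum_i b_i\star b_i^*$ with $b_i\in A(R)$. Since $A(R)$ decomposes as a direct sum over connected components (and the $\star$-product, the truncation $T$, and both positivity notions respect this decomposition), I would reduce immediately to the case where the graph of $R$ is connected, with a dominant vertex which I relabel as vertex $1$ (so $1\sim j$ for every $j$).

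The key structural observation I would exploit is this: since $1$ is dominant, for any matrix $c\in M_n(\C)$ the products $E_{11}c$ and $cE_{11}$ land in $A(R)$ already (their only nonzero entries sit in row $1$ or column $1$, and $(1,j),(j,1)\in R$ for all $j$), and moreover $T$ acts as the identity on such elements. Given $b\geq 0$, write $b=\sum_i v_iv_i^*$ for column vectors $v_i\in\C^n$ (spectral decomposition). Then $T(b)=\sum_i T(v_iv_i^*)$, so by linearity it suffices to handle a single rank-one positive matrix $b=vv^*$. Now I would split $v$ using the dominant vertex: write $v = v_1 e_1 + w$ where $w=(0,v_2,\dots,v_n)^{\mathsf t}$, and expand
\[
vv^* = |v_1|^2 E_{11} + v_1 e_1 w^* + \overline{v_1}\, w e_1^* + w w^*.
\]
Applying $T$: the first three terms are already in $A(R)$ and fixed by $T$; only $T(ww^*)$ is potentially nontrivial, and $T(ww^*)$ is the matrix obtained from $ww^*$ by zeroing out the entries $(i,j)$ with $i,j\geq 2$ and $i\not\sim j$ — note both indices are away from the dominant vertex. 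The claim is that $T(vv^*)$ can be written as a sum $\sum_i b_ib_i^*$ with each $b_i$ of the special ``cross'' form (supported on row $1$ plus column $1$ plus the diagonal), hence automatically in $A(R)$, plus the already-handled cross terms which are themselves differences/combinations expressible via the polarization-type identity $v_1 e_1 w^* + \overline{v_1} w e_1^* = (v_1 e_1 + w')(v_1 e_1 + w')^* - |v_1|^2E_{11} - w'w'^*$ for suitable $w'$, keeping everything inside $A(R)$.

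The main obstacle — and the part I would spend the most care on — is handling $T(ww^*)$, i.e.\ the ``interior'' block indexed by the non-dominant vertices $\{2,\dots,n\}$ with its own (truncated) positivity. Here the dominant vertex is no longer available to absorb off-diagonal terms, so one cannot simply recurse. Instead I expect the right move is: $T(ww^*)$ differs from the genuinely positive matrix $ww^*$ only in entries $(i,j)$ with $i,j\geq 2$; each such entry, call it $w_i\overline{w_j}$, gets replaced by $0$. I would cancel these by adding, for each offending pair, a correction of the form $(\alpha e_1 + \beta e_i + \gamma e_j)(\cdots)^*$ — a rank-one positive matrix supported on $\{1,i,j\}$, which is a \emph{complete} subgraph (since $1$ is dominant and we need $i,j$ adjacent — but they are precisely the non-adjacent ones!). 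This is the genuine subtlety: when $i\not\sim j$ the triangle $\{1,i,j\}$ is \emph{not} complete, so the correction matrix $e_1e_i^* $-type terms are fine but an $e_ie_j^*$ term is not allowed. The resolution I anticipate: one only ever needs to \emph{remove} the $(i,j)$ and $(j,i)$ entries, and one can do so by subtracting $w_i\overline{w_j}E_{ij}+\overline{w_i}w_j E_{ji}$, which, together with compensating diagonal terms $tE_{ii}+t^{-1}|w_i\overline{w_j}|^2E_{jj}$ and a row/column-$1$ term, assembles into $b_ib_i^*$ for a cross-supported $b_i$; since $E_{11},E_{ii},E_{1i},E_{i1}$ all lie in $A(R)$, these $b_i$ do too. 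Summing the original positive $vv^*$ (re-expressed via cross-supported vectors using the dominant vertex as above) with these corrections yields $T(vv^*)=\sum_i b_ib_i^*$ with all $b_i\in A(R)$, and since each $b_ib_i^*$ lies in $A(R)$ we have $b_ib_i^*=T(b_ib_i^*)=b_i\star b_i^*$, completing the proof. I would double-check the bookkeeping of which diagonal compensations are needed to keep every intermediate matrix positive semidefinite, as that is where a naive attempt is most likely to fail.
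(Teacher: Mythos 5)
Your easy direction and your two reductions (to connected components of the graph, and then, by linearity of $T$ and the spectral decomposition, to a single rank-one matrix $vv^*$) coincide with the paper's. But the core of your argument rests on a target identity that is false in general: you aim to write $T(vv^*)=\sum_i b_ib_i^*$ with each $b_ib_i^*$ lying in $A(R)$ and hence fixed by $T$. Any identity of that shape would force $T(vv^*)$ to be positive semidefinite in $M_n(\C)$, since a sum of hermitian squares is, and it typically is not: for the relation \eqref{eq:exgraph} and $v=\tfrac{1}{\sqrt 3}(1,1,1)$ one gets exactly the matrix of Example \ref{ex:tol12} (up to normalization), whose determinant is negative. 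For the same reason your correction scheme cannot close: removing the entry $w_i\overline{w_j}$ at a non-edge by adding a rank-one positive matrix necessarily injects extra diagonal mass $tE_{ii}+t^{-1}|w_i\overline{w_j}|^2E_{jj}$ that would then have to be \emph{subtracted}, while \eqref{eq:naturalnotion} admits only nonnegative sums of squares. A further small error: for a cross-supported matrix $b$ the product $bb^*$ has a nonzero $(i,j)$ entry whenever the $(i,1)$ and $(j,1)$ entries of $b$ are both nonzero, so it does \emph{not} automatically lie in $A(R)$.

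The idea you are missing is that, since $b\star b^*=T(bb^*)$ by definition of the truncated product, you do not need to factor the truncated matrix $T(vv^*)$ at all: you need to factor the \emph{untruncated} matrix $vv^*$ as $bb^*$ with $b\in A(R)$, and only then apply $T$. The dominant vertex (your vertex $1$, the paper's $j_0$) makes this a one-line construction: take $b:=\sum_i v_iE_{ij_0}$, the matrix whose $j_0$-th column is $v$ and whose other columns vanish. It lies in $A(R)$ because $(i,j_0)\in R$ for every $i$, and $bb^*=vv^*$, so $a=T(vv^*)=T(bb^*)=b\star b^*$, with $k=1$ and no corrections. This is the paper's proof; the splitting of $v$ along $e_1$, the polarization identities, and the separate treatment of $T(ww^*)$ all become unnecessary once this is observed.
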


\begin{proof}
We have to prove the implication $\Rightarrow$, the other being always true. Clearly every $a\succeq 0$ is of the form $T(\sum_{i=1}^kb_ib_i^*)$ for some $b_1,\ldots,b_k\in M_n(\C)$ (in fact, even with $k=1$). The non-trivial part is to show that one can choose $b_1,\ldots,b_k$ belonging to $A(R)$.

Up to a permutation of rows and columns, which preserves both the partial order $\succeq$ and the decompositions of the form \eqref{eq:naturalnotion}, we can think of $A(R)\subset M_n(\C)$ as subset of block diagonal matrices where each block corresponds to a connected component of the graph of $R$.
It is then enough to prove the proposition for each block, i.e.~under the assumption that the graph of $R$ is connected.

By hypothesis, there exists $j_0$ such that $(i,j_0)\in R$ for all $i\in\{1,\ldots,n\}$.
Let $a=T(c)$ with $c\in M_n(\C)$ positive semidefinite. Since $T$ is linear and every positive semidefinite matrix is a linear combination with positive coefficients of rank $1$ projections, it is enough to prove the statement when $c$ is itself a rank $1$ projection. Thus, $c=\sum_{i,j=1}^nv_iv_j^*E_{ij}$ for some unit vector $v=(v_1,\ldots,v_n)\in\C^n$. Let $b:=\sum_{i=1}^nv_iE_{ij_0}$ and observe that $b\in A(R)$. Since
$bb^*=c$, this concludes the proof.
\end{proof}

We will see that the assumption in Prop.~\ref{prop:counter} is necessary (cf.~Prop.~\ref{prop:isnecessary}).

As recalled in the previous section, we use the partial order $\geq$ to define $\mathcal{S}(A)$. Explicitly, we define a partial order $\geq$ on the dual vector space $A^*$ by setting $\varphi\geq 0$ if and only if $\varphi(a)\geq 0$ for all $a\geq 0$. A state $\varphi\in\mathcal{S}(A)$ will then be a positive element of $A^*$ with normalization $\varphi(1)=1$.
With such a definition, the restriction of a state of $M_n(\C)$ to $A$ is a state, and \eqref{eq:F} is well defined.

Even if we use $\geq$ to define states, when trying to describe states in terms of matrices the other partial order $\succeq$ pops up, as explained below. This stems from the next crucial observation.

\begin{prop}\label{prop:orderiso}
For $x\in M_n(\C)$, let $\varphi_x:M_n(\C)\to \C$ be the linear map defined by
$$
\varphi_x(a):=\tr(x^*a)\qquad\forall\;a\in M_n(\C).
$$
Then, the map $(A,\succeq)\to (A^*,\geq)$ sending $x\in A$ to $\varphi_x|_A$ is an isomorphism of ordered vector spaces.
\end{prop}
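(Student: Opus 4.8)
The plan is to read the whole statement through the single structural fact that $T$ is the \emph{orthogonal projection} of $M_n(\C)$ onto $A$ for the Hilbert--Schmidt inner product $\inner{x,y}:=\tr(x^*y)$: indeed $T$ is idempotent with $\mathrm{Im}(T)=A$, and a one-line computation on matrix entries gives $\inner{T(x),y}=\sum_{(i,j)\in R}\overline{x_{ij}}\,y_{ij}=\inner{x,T(y)}$, so $T=T^*=T^2$. Two consequences will be used throughout. First, $\inner{\cdot,\cdot}$ restricts to a non-degenerate form on the finite-dimensional space $A$ (if $\varphi_x|_A=0$ then $\tr(x^*x)=\|x\|_{\mathrm{HS}}^2=0$, so $x=0$), hence $x\mapsto\varphi_x|_A$ is a bijection $A\to A^*$: injectivity is immediate and surjectivity then follows since $\dim A=\dim A^*$. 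Second, because $\varphi_y(a)=\inner{y,a}$ and $T(a)=a$ for $a\in A$, one has, for every $b\in M_n(\C)$ and $a\in A$,
\begin{equation}\label{eq:keyid}
\varphi_{T(b)}(a)=\inner{T(b),a}=\inner{b,T(a)}=\inner{b,a}=\tr(b^*a).
\end{equation}
(One also checks $T(b)^*=T(b^*)$ using that $R$ is symmetric, so that $\succeq$-positive elements are automatically self-adjoint; this is reassuring but not strictly needed below.)

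Given the bijectivity, it remains to prove the equivalence $x\succeq 0\iff\varphi_x|_A\geq 0$. The implication $\Rightarrow$ is routine: if $x=T(b)$ with $b\geq 0$, then for any $a\in A$ with $a\geq 0$, identity \eqref{eq:keyid} gives $\varphi_x(a)=\tr(b^*a)=\tr(ba)\geq 0$, since the trace of a product of two positive semidefinite matrices is non-negative. For the converse I would use that $A=A(R)$ is a unital operator system inside $M_n(\C)$, so that a positive linear functional on $A$ extends to a positive linear functional on $M_n(\C)$ --- this is the only non-elementary ingredient, and it is just the extension property for states already invoked in the paper in connection with the map \eqref{eq:F} (if $\varphi_x\neq 0$ then $\varphi_x(1)>0$; rescale to a state, extend by Hahn--Banach, rescale back; the case $\varphi_x=0$, i.e.~$x=0$, is trivial). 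So assume $\varphi_x|_A\geq 0$ and let $\psi$ be a positive extension to $M_n(\C)$. Every positive functional on $M_n(\C)$ has the form $\psi(a)=\tr(ba)$ with $b\geq 0$, and then for all $a\in A$
$$
\varphi_x(a)=\psi(a)=\tr(ba)=\tr(b^*a)=\varphi_{T(b)}(a)
$$
by \eqref{eq:keyid}. Since $x=T(x)$ and $T(b)$ both lie in $A$ and induce the same functional on $A$, injectivity of $x\mapsto\varphi_x|_A$ forces $x=T(b)$, hence $x\succeq 0$. Combining the two implications with bijectivity shows that $x\mapsto\varphi_x|_A$ maps the cone $\{x\in A:x\succeq 0\}$ bijectively onto $\{\varphi\in A^*:\varphi\geq 0\}$, i.e.~it is an isomorphism of ordered vector spaces.

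The step I expect to be the crux is the converse, and specifically the passage from a positive functional on the operator system $A$ to one on $M_n(\C)$; everything else is bookkeeping with the orthogonal projection $T$. A more self-contained route would be a cone-duality argument showing directly that the image under $T$ of the positive cone of $M_n(\C)$ is the dual cone, inside $(A,\inner{\cdot,\cdot})$, of $\{a\in A:a\geq 0\}$; but this has to contend with the familiar subtlety that the image of a closed convex cone under a linear map need not be closed --- exactly the difficulty that the extension theorem bypasses.
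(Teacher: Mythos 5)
Your proposal is correct and follows essentially the same route as the paper: bijectivity via non-degeneracy of the Hilbert--Schmidt pairing on the finite-dimensional space $A$, the implication $x\succeq 0\Rightarrow\varphi_x\geq 0$ by positivity of $\tr(ba)$ for $a,b\geq 0$, and the converse by extending the positive functional to $M_n(\C)$ (Hahn--Banach for operator systems), representing it as $\tr(b\,\cdot)$ with $b\geq 0$, and using the identity $\tr(ba)=\tr(T(b)a)$ for $a\in A$ to conclude $x=T(b)$. Your observation that $T$ is the orthogonal projection onto $A$ is just a cleaner packaging of the paper's computation \eqref{eq:weuse} via cyclicity of the trace and symmetry of $R$, and your explicit handling of the rescaling to a state (and the trivial case $\varphi_x=0$) is a small extra care the paper leaves implicit.
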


Recall that an isomorphism of ordered complex vector spaces is an isomorphism of the underlying real vector spaces that is compatible with the partial orders (this remark is necessary since $x\mapsto\varphi_x$ is an antilinear map), cf.~\cite{SW99}.
Notice that the isomorphism above transforms the partial order $\succeq$, defined using the non-associative product, into the partial order $\geq$, which only depends on the underlying operator system.

\begin{proof}
For $a,b\in M_n(\C)$ let $\inner{a,b}:=\tr(a^*b)$ be the Hilbert-Schmidt inner product. Its restriction to $A$ is still an inner product, and from its non-degeneracy it follows that the map $A\to A^*$, $x\mapsto\varphi_x=\inner{x,\,.\,}$, is injective. Since the vector spaces $A$ and $A^*$ have the same dimension, such a map is surjective as well, hence an isomorphism of real vector spaces. It remains to show that,
for all $x\in A$, one has
\begin{equation}
x\succeq 0\iff \varphi_x\geq 0.
\end{equation}
By Hahn-Banach theorem, every positive element in $A^*$ is the restriction of a positive linear functional $M_n(\C)\to\C$.
Thus $\varphi_x\geq 0$ if and only if there exists $b\in M_n(\C)$ such that $b\geq 0$ and
$$
\varphi_x(a)=\varphi_b(a)=\tr(ba) \qquad\forall\;a\in A.
$$
Repeating the proof of \eqref{eq:weuse} one finds that
$$
\varphi_b(a)=\tr(T(b)a) \qquad\forall\;a\in A \;,
$$
thus $\inner{x-T(b),a}=0$ for all $a\in A$, which implies $x=T(b)$. Since $b\geq 0$, it follows that $x\succeq 0$.
This proves ``$\Leftarrow$''.

Conversely, if $x\succeq 0$, then $x=T(b)$ for some positive $b\in M_n(\C)$ and
$\varphi_x=\varphi_{T(b)}|_A=\varphi_b|_A$ is the restriction to $A$ of the positive linear functional \mbox{$\varphi_b:M_n(\C)\to\C$}, hence it is positive itself.
\end{proof}

\begin{df}
Let us denote by $\mathcal{D}(A)$ the set of all $\rho\in A$ such that $\rho\succeq 0$ and $\tr(\rho)=1$.
\end{df}

If $A=M_n(\C)$ (thus, $R$ is the trivial relation $i\sim j$ for all $i,j=\{1,\ldots,n\}$), the one above is the standard definition of \emph{density matrix}.

In general, since the relation $\geq$ is contained in the relation $\succeq$, every positive semidefinite matrix with trace $1$ (every density matrix) belongs to $\mathcal{D}(A)$, but there may be matrices in $\mathcal{D}(A)$ that are not positive semidefinite.
We may call an element $\rho\in A$ with trace $1$ a \emph{strict} density matrix if $\rho\geq 0$, and a \emph{weak} density matrix if $\rho\succeq 0$.\footnote{Observe that, in the physics language, a strict density matrix \emph{is} a density matrix, while a weak density matrix is not.}

\begin{ex}
Let $R$ be the relation \eqref{eq:exgraph} and
$$
\rho=\frac{1}{3}\begin{pmatrix}
1 & 1 & 0 \\ 1 & 1 & 1 \\ 0 & 1 & 1
\end{pmatrix}=\frac{1}{3}\;T\!\begin{pmatrix}
1 & 1 & 1 \\ 1 & 1 & 1 \\ 1 & 1 & 1
\end{pmatrix} .
$$
This is a weak density matrix for $A(R)$, but not a strict one since $\det(\rho)<0$.
\end{ex}

An immediate consequence of Prop.~\ref{prop:orderiso} is that states on $A$ in the sense of operator systems are in bijection with weak density matrices in the sense of (non-associative) tolerance algebras. The same conclusion could be reached using Prop.~3.10 of \cite{CvS21}.

\begin{cor}
The map $\mathcal{D}(A)\to\mathcal{S}(A)$, $\rho\mapsto\varphi_\rho$, is a bijection (in fact, it is a homeomorphism if we put the norm topology on $\mathcal{D}(A)$ and the weak-$^*$ topology on $\mathcal{S}(A)$).
\end{cor}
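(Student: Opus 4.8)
The plan is to read off the statement directly from Proposition \ref{prop:orderiso}. By that proposition the (anti\-)linear map $\Phi\colon A\to A^*$, $x\mapsto\varphi_x|_A$, is an isomorphism of ordered real vector spaces $(A,\succeq)\to(A^*,\geq)$; in particular it carries the positive cone $\{x\in A:x\succeq 0\}$ bijectively onto the positive cone $\{\varphi\in A^*:\varphi\geq 0\}$. Since $\mathcal{D}(A)$ and $\mathcal{S}(A)$ are obtained from these cones by imposing the affine normalizations $\tr(\rho)=1$ and $\varphi(1)=1$ respectively, it suffices to check that $\Phi$ matches these two conditions. Here the relevant remark is that $1=\sum_iE_{ii}\in A$ (the tolerance algebra always contains the diagonal matrices), so for $x\in A$ one computes $\varphi_x(1)=\tr(x^*\,1)=\tr(x^*)=\overline{\tr(x)}$; hence $\varphi_x(1)=1$ if and only if $\tr(x)=1$. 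Therefore $\Phi$ restricts to a bijection $\mathcal{D}(A)\to\mathcal{S}(A)$.

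I would then check that this restricted bijection is exactly the map $\rho\mapsto\varphi_\rho$ of the statement. For $\rho\in\mathcal{D}(A)$ one has $\rho\succeq 0$, i.e.\ $\rho=T(b)$ with $b\geq 0$ in $M_n(\C)$; since $b=b^*$, the symmetry of $R$ applied to \eqref{eq:trunmapfin} (reindex the sum by $(i,j)\mapsto(j,i)$) gives $T(b)^*=T(b)$, so $\rho$ is selfadjoint and $\varphi_\rho(a)=\tr(\rho^*a)=\tr(\rho a)$ as expected. Injectivity of $\rho\mapsto\varphi_\rho$ is inherited from injectivity of $\Phi$, and surjectivity is immediate: given $\varphi\in\mathcal{S}(A)$, positivity $\varphi\geq 0$ together with surjectivity of $\Phi$ onto the positive cone yields $x\in A$ with $x\succeq 0$ and $\varphi=\varphi_x$, and then $\tr(x)=\varphi_x(1)=\varphi(1)=1$, so $x\in\mathcal{D}(A)$.

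For the topological assertion I would simply invoke finite-dimensionality. The real vector spaces underlying $A$ and $A^*$ are finite-dimensional, so $\Phi$, being a real-linear isomorphism, is automatically a homeomorphism for the unique Hausdorff vector-space topology on each side; on $A$ that topology is the norm topology inherited from $M_n(\C)$, and on $A^*$ it agrees with the weak-$^*$ topology. A homeomorphism restricts to a homeomorphism of any pair of corresponding subsets, so $\rho\mapsto\varphi_\rho$ is a homeomorphism $\mathcal{D}(A)\to\mathcal{S}(A)$.

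I do not expect any real obstacle here: the statement is essentially a repackaging of Proposition \ref{prop:orderiso}. The only points deserving a line of care are that $1\in A$ (so that the normalizations can be compared), that weak density matrices are selfadjoint (so that $\varphi_\rho$ has its usual form and $\tr(\rho)\in\R$), and the standing remark that in finite dimensions the weak-$^*$ topology is the only topology one could mean.
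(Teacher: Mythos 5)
Your argument is correct and is exactly the route the paper takes: the corollary is stated there as an immediate consequence of Proposition \ref{prop:orderiso}, with no further proof given, and your write-up simply supplies the routine details (matching the normalizations $\tr(\rho)=1$ and $\varphi(1)=1$ via $1=\sum_i E_{ii}\in A$, selfadjointness of weak density matrices, and finite-dimensionality for the homeomorphism claim). Nothing is missing.
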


\subsection{Pure states of a tolerance algebra}
Let $R$ be a tolerance relation on the set $\{1,\ldots,n\}$, $A(R)$ and $T$ as in the previous section.
Here we want to give an explicit description of pure states of $A(R)$.

Given a non-zero vector $v=(v_1,\ldots,v_n)\in\C^n$, then
\begin{equation}\label{eq:Rv}
R_v :=\big\{ (i,j)\in R: v_iv_j\neq 0 \big\}
\end{equation}
is a tolerance relation on the (non-empty) subset of $i\in\{1,\ldots,n\}$ such that $v_i\neq 0$. In terms of graphs, $R_v$ is the subgraph of $R$ obtained by removing all vertices $i$ such that $v_i=0$, and all edges that are incident on such vertices.

\begin{df}\label{df:25}
A non-zero vector $v\in\C^n$ is called $R$-\emph{tolerant} if the graph of $R_v$ is connected.
A pure state of $M_n(\C)$ is called $R$-\emph{tolerant} if its density matrix is a projection in the direction of an $R$-tolerant vector.
\end{df}

Notice that in the previous definition we do not assume that the graph of $R$ is connected.

Pictorially, some examples are illustrated in Figure \ref{fig:three}, where vertices are labelled by the components of $v$.
In the first example (on the left), since $v_3$ is adjacent to every other vertex, if $v_3\neq 0$ the graph of $R_v$ is connected; if $v_3=0$ and $v_4\neq 0$, $R_v$ is disconnected; if $v_3=v_4=0$, then $R_v$ is connected. Similarly in the third example (on the right), $R_v$ is disconnected if and only if $v_2=0$ and both $v_1$ and $v_3$ are non-zero. In the second example (in the middle), we see the opposite phenomenon: if both $v_1$ and $v_2$ are non-zero, $R_v=R$ is disconnected; if $v_1=0$ or $v_2=0$, $R_v$ is connected.

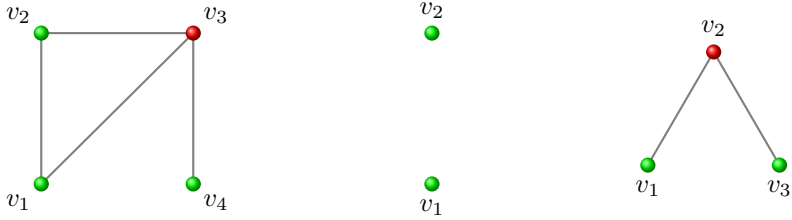
\begin{figure}[t]
\begin{tikzpicture}[font=\small,baseline=(current bounding box.center)]

\draw[gray,thick] (2,0) -- (2,2) -- (0,2) -- (0,0) -- (2,2);

\shade[ball color=green] (0,0) circle (0.1) node[below left] {$v_1$};
\shade[ball color=green] (0,2) circle(0.1) node[above left] {$v_2$};
\shade[ball color=red] (2,2) circle(0.1) node[above right] {$v_3$};
\shade[ball color=green] (2,0) circle(0.1) node[below right] {$v_4$};

\end{tikzpicture}
\hspace{2cm}
\begin{tikzpicture}[font=\small,baseline=(current bounding box.center)]

\shade[ball color=green] (0,2) circle(0.1) node[above=2pt] {$v_2$};
\shade[ball color=green] (0,0) circle (0.1) node[below=2pt] {$v_1$};

\end{tikzpicture}
\hspace{2cm}
\begin{tikzpicture}[font=\small,baseline=(current bounding box.center)]

\draw[gray,thick] (330:1) -- (90:1) -- (210:1);

\shade[ball color=red] (90:1) circle(0.1) node[above=2pt] {$v_2$};
\shade[ball color=green] (210:1) circle (0.1) node[below=2pt] {$v_1$};
\shade[ball color=green] (330:1) circle (0.1) node[below=2pt] {$v_3$};

\end{tikzpicture}

\caption{Some tolerance relations discussed in the text.}\label{fig:three}
\end{figure}

Inspired by these ``experiments'', let us record some more general examples.

\begin{ex}
If $A(R)=M_n(\C)$, then every non-zero vector $v\in\C^n$ is $R$-tolerant ($R_v$ is connected for all $v\neq 0$, since a complete graph cannot be disconnected by deleting vertices).

If $A(R)=\C^n$, then $R$-tolerant vectors are those with only one non-zero component (corresponding to diagonal projection matrices).

If $R$ is the relation \eqref{eq:exgraph}, then $v\in\C^3$ is $R$-tolerant except when $v_2=0$ and $v_1,v_3\neq 0$.
\end{ex}

Under the isomorphism between a vector subspace of $M_n(\C)$ and its dual, given by the Hilbert-Schmidt inner product, the map
$F:\mathcal{S}(M_n(\C))\to \mathcal{S}(A(R))$ in \eqref{eq:F} becomes the restriction to density matrices of the truncation map $T:M_n(\C)\to A(R)$.

If $v\in\C^n$ is a unit vector, let us denote by $P_v:=(v_iv_j^*)\in M_n(\C)$ the corresponding projection.
Because of Lemma \ref{lemma:previous}(ii), every pure state of $A(R)$ is the restriction of a pure state of $M_n(\C)$, hence it has weak density matrix
$T(P_v)$ for some unit vector $v\in\C^n$. The remaining part of this section is devoted to the proof that:

\begin{prop}\label{prop:Rtol}
$T(P_v)$ is the weak density matrix of a pure state of $A(R)$ if and only if $v\in\C^n$ is an $R$-tolerant unit vector. More precisely, $F$ gives a bijection between the subset of pure states of $M_n(\C)$ that are $R$-tolerant and pure states of $A(R)$.
\end{prop}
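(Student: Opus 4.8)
The plan is to work entirely with density matrices, using the dictionary set up just before the statement: by Proposition~\ref{prop:orderiso} the states of $A(R)$ are exactly the functionals $\varphi_\rho$ attached to weak density matrices $\rho\in\mathcal D(A)$, and under this dictionary the surjection $F$ of~\eqref{eq:F} becomes the map $\rho\mapsto T(\rho)$ on density matrices. As recalled in the excerpt (Lemma~\ref{lemma:previous}(ii) together with Krein--Milman), every pure state of $A(R)$ is of the form $\varphi_{T(P_v)}$ for some unit vector $v\in\C^n$; so the proposition reduces to showing (a) that $\varphi_{T(P_v)}$ is pure precisely when $v$ is $R$-tolerant, and (b) that $F$ is injective on the set of projections $P_v$ with $v$ an $R$-tolerant unit vector.

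The technical core, which I would prove first, is the following linear-algebra statement: \emph{if $v$ is an $R$-tolerant unit vector and $b\in M_n(\C)$ is positive semidefinite with $T(b)=T(P_v)$, then $b=P_v$}; equivalently, the fiber $F^{-1}(\varphi_{T(P_v)})$ is the singleton $\{P_v\}$. To prove it, note that $T(b)=T(P_v)$ forces $b_{ij}=v_i\overline{v_j}$ for every $(i,j)\in R$, in particular $b_{ii}=|v_i|^2$; since $b$ is positive, the rows and columns of $b$ indexed by those $i$ with $v_i=0$ vanish, so $b$ is supported on $S:=\mathrm{supp}(v)$. For each edge $(i,j)$ of the graph of $R_v$ the $2\times2$ principal minor $b_{ii}b_{jj}-|b_{ij}|^2=|v_i|^2|v_j|^2-|v_i\overline{v_j}|^2$ vanishes; writing $b=C^*C$, this is the case of equality in the Cauchy--Schwarz inequality for the $i$-th and $j$-th columns of $C$, and hence forces the $i$-th and $j$-th columns of $b$ to be proportional. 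Since the graph of $R_v$ is connected, all columns of $b$ indexed by $S$ are then proportional, so $b$ has rank at most $1$; being positive and nonzero, $b=ww^*$ for some $w$ supported on $S$, and comparing $b=ww^*$ with $b_{ii}=|v_i|^2$ and with $b_{ij}=v_i\overline{v_j}$ along the edges of the connected graph $R_v$ shows that $w$ and $v$ differ by a single global phase, whence $b=P_v$. This is the step I expect to be the main obstacle; the rest is bookkeeping.

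Granting this, one direction of (a) and the whole of (b) follow at once: if $v$ is $R$-tolerant then $F^{-1}(\varphi_{T(P_v)})=\{P_v\}$ with $P_v$ a pure state of $M_n(\C)$, so $\varphi_{T(P_v)}$ is pure by Lemma~\ref{lemma:previous}(iii); and if $T(P_v)=T(P_w)$ with $v$ $R$-tolerant, then $P_w$ lies in $F^{-1}(\varphi_{T(P_v)})=\{P_v\}$, so $P_w=P_v$. For the converse direction of (a), suppose $v$ is \emph{not} $R$-tolerant, i.e.\ the graph of $R_v$ is disconnected: then $\mathrm{supp}(v)=S_1\sqcup S_2$ with both parts nonempty and with $(i,j)\notin R$ whenever $i\in S_1$, $j\in S_2$. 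Writing $v=v'+v''$ with $v'$ supported on $S_1$ and $v''$ on $S_2$, and putting $s:=\|v'\|^2$ (so $0<s<1$ since $v',v''\neq 0$), a direct inspection of matrix entries --- the only point being that $R$ has no edge mixing $S_1$ and $S_2$, so neither does $T(P_v)$ --- gives $T(P_v)=s\,T(P_{v'/\sqrt{s}})+(1-s)\,T(P_{v''/\sqrt{1-s}})$, a nontrivial convex combination of two \emph{distinct} weak density matrices (they already differ in the diagonal entries indexed by $S_1$), so $\varphi_{T(P_v)}$ is not pure. Combining the two directions of (a) with the quoted surjectivity and with (b) yields the claimed bijection between $R$-tolerant pure states of $M_n(\C)$ and $\mathcal P(A(R))$.
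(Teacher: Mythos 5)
Your proof is correct, and its global architecture coincides with the paper's: reduce to density matrices via Proposition~\ref{prop:orderiso}, prove that for $R$-tolerant $v$ the fiber of $F$ over $\varphi_{T(P_v)}$ is the singleton $\{P_v\}$ (which gives both purity, via Lemma~\ref{lemma:previous}(iii), and injectivity on $R$-tolerant pure states), and for non-tolerant $v$ split the support into two pieces with no edges between them to exhibit $T(P_v)$ as a nontrivial convex combination. The genuine difference is in the uniqueness step. The paper proves it (Lemmas~\ref{lemma:descdiag} and~\ref{lemma:tolsing}) by taking, for each pair $(i,j)\notin R$ with $v_i,v_j\neq 0$, a path in $R_v$ joining $i$ to $j$ and running an induction on the path length in which each step forces a $3\times 3$ principal minor to be nonnegative only when the unknown off-diagonal entry equals $v_iv_j^*$. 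You instead write $b=C^*C$, observe that on each edge of $R_v$ the vanishing of the $2\times 2$ minor is the equality case of Cauchy--Schwarz and forces the corresponding columns of $C$ to be proportional, and then use connectedness of $R_v$ to conclude that $b$ has rank one, after which a global-phase argument pins down $b=P_v$. Your route is arguably cleaner: it replaces the induction and determinant computations by a single structural observation (rank one), and it determines $b$ globally rather than one missing entry at a time; the paper's path-by-path argument is more hands-on but yields the slightly more quotable intermediate fact (Lemma~\ref{lemma:descdiag}) that a positive semidefinite matrix with prescribed nonvanishing tridiagonal part of rank-one type is forced to be rank one, which the authors reuse implicitly in Proposition~\ref{prop:isnecessary}. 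Both arguments are complete; the only point worth making explicit in yours is that proportionality of the columns of $C$ is transitive because the relevant columns are nonzero (their norms are $|v_i|\neq 0$ for $i$ in the support), which you do implicitly when restricting to edges of $R_v$.
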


Before we prove the proposition, let us note that the definition of $R$-tolerant vector is very close to the one of
state with $\epsilon$-connected support in \cite[Sect.~4.2]{CvS21}. However, the latter concerns proximity relations on a metric space, while our more general Definition \ref{df:25} is for arbitrary tolerance relations on a finite set. There is, of course, a non-empty intersection between the two cases. On this intersection, Prop.~\ref{prop:Rtol} is analogous to Theorem 4.18 \cite{CvS21}.

We need a few preliminary lemmas.

\begin{lemma}\label{lemma:descdiag}
Let $k\geq 3$, let $w\in\C^k$ be a vector with $w_i\neq 0$ for all $i\in\{1,\ldots,k\}$, and let $a=(a_{ij})\in M_k(\C)$ be a positive semidefinite matrix such that $a_{ij}=w_iw_j^*$ for all $|i-j|\leq 1$. Then $a_{ij}=w_iw_j^*$ for all $i,j\in\{1,\ldots,k\}$.
\end{lemma}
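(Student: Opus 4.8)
The plan is to exploit positive semidefiniteness of $a$ through its principal $2\times 2$ minors, which force the off-diagonal entries to have exactly the modulus dictated by the diagonal, and then to propagate the phase information along the chain $1,2,\ldots,k$ using the hypothesis that consecutive entries already agree with $w_iw_j^*$. First I would normalise: since $a_{ii}=w_i\overline{w_i}=|w_i|^2\neq 0$ for every $i$ (taking $i=j$ in the hypothesis $|i-j|\le 1$), all diagonal entries are strictly positive, so I may conjugate $a$ by the diagonal unitary $\mathrm{diag}(w_1/|w_1|,\ldots,w_k/|w_k|)$ and rescale, reducing to the case $w_i>0$ for all $i$, or even $w=(1,\ldots,1)$ after a further diagonal (positive) rescaling; the conclusion $a_{ij}=w_iw_j^*$ is equivalent before and after these operations. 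So it suffices to treat $w=(1,\ldots,1)$: then $a$ is positive semidefinite with $a_{ii}=1$ and $a_{i,i+1}=1$ for all $i$, and I must show $a_{ij}=1$ for all $i,j$.

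The key step is: if $a\ge 0$ and $a_{ii}=a_{jj}=1$ and $a_{ij}=1$, then the pair of indices $\{i,j\}$ behaves rigidly, in the sense that for every third index $\ell$ one has $a_{i\ell}=a_{j\ell}$. This follows from looking at the $2\times 2$ principal submatrix on rows/columns $\{i,j\}$, which is $\begin{bmatrix}1&1\\1&1\end{bmatrix}$, hence rank $1$ with kernel spanned by $e_i-e_j$; since $a\ge 0$, any vector in the kernel of a principal submatrix lies in the kernel of $a$ restricted appropriately — more precisely, $(e_i-e_j)^*a(e_i-e_j)=a_{ii}-a_{ij}-a_{ji}+a_{jj}=0$, and a positive semidefinite matrix annihilates, via its bilinear form, exactly its kernel, so $a(e_i-e_j)=0$, i.e. the $i$-th and $j$-th columns of $a$ coincide. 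Applying this with $j=i+1$ (where we know $a_{i,i+1}=1$) gives that column $i$ equals column $i+1$ for every $i=1,\ldots,k-1$; chaining these equalities, all columns of $a$ are equal, and since the diagonal entries are all $1$ every entry equals $1$. Translating back through the diagonal conjugation and rescaling yields $a_{ij}=w_iw_j^*$ in general.

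I expect the main obstacle to be purely expository rather than mathematical: making the reduction to $w=(1,\ldots,1)$ clean (one must check that the chosen diagonal similarity indeed preserves both ``$a$ positive semidefinite'' and the precise form of the hypothesis and conclusion, which it does since $D a D^*$ with $D$ diagonal invertible preserves positivity and $(DaD^*)_{ij}=D_i a_{ij}\overline{D_j}$), and citing the elementary fact that for $a\ge 0$, $v^*av=0$ implies $av=0$ (this is the standard ``$a\ge 0$ has a square root $b$ with $a=b^*b$, so $\|bv\|^2=0$'' argument). Alternatively, one can bypass the reduction entirely and argue directly: from $a\ge0$ and $a_{ii}=|w_i|^2$, $a_{i,i+1}=w_i\overline{w_{i+1}}$ one computes $(w_{i+1}e_i-w_i e_{i+1})^*a(w_{i+1}e_i-w_i e_{i+1})=|w_{i+1}|^2|w_i|^2-w_{i+1}\overline{w_i}\,w_i\overline{w_{i+1}}-\overline{w_{i+1}}w_i\,\overline{w_i}w_{i+1}+|w_i|^2|w_{i+1}|^2=0$, hence $a(w_{i+1}e_i-w_i e_{i+1})=0$, i.e. $w_{i+1}a_{\ell i}=w_i a_{\ell,i+1}$ for all $\ell$; iterating this relation along $i=1,\ldots,k-1$ expresses every $a_{\ell j}$ in terms of $a_{\ell 1}$ and the $w$'s, and combined with the known consecutive entries pins down $a_{\ell j}=w_\ell w_j^*$. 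I would present whichever of the two is shorter; both are routine once the ``$v^*av=0\Rightarrow av=0$'' lemma is invoked.
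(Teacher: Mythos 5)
Your main argument is correct and takes a genuinely different route from the paper's. The paper argues by induction on $k$: for $k=3$ it writes the unknown entry as $a_{13}=\lambda_{13}w_1w_3^*$, computes $\det(a)=(\lambda_{13}+\lambda_{13}^*-|\lambda_{13}|^2-1)|w_1w_2w_3|^2$, and observes that $\det(a)\geq 0$ forces $\lambda_{13}=1$; the inductive step deletes the last row and column and repeats the same $3\times 3$ determinant computation on the principal minor indexed by $\{1,k-1,k\}$. You instead conjugate by the invertible diagonal matrix $\mathrm{diag}(w_1^{-1},\ldots,w_k^{-1})$ (legitimate, since $D a D^*$ preserves positivity and rescales entries as you state) to reduce to $w=(1,\ldots,1)$, and then use the standard fact that $a\geq 0$ and $v^*av=0$ imply $av=0$ to conclude that consecutive columns of $a$ coincide; chaining along $i=1,\ldots,k-1$ finishes the proof with no induction and no determinants. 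Your route is arguably cleaner and makes visible the stronger conclusion that $a$ is forced to equal the rank-one matrix $ww^*$, whereas the paper's determinant computation is more self-contained but must be propagated minor by minor.

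One caveat, which does not affect the validity of the proof since it concerns only your optional ``direct'' variant: the test vector there is wrong. With $a_{i,i+1}=w_i\overline{w_{i+1}}$, the vector annihilated by the $2\times 2$ principal block on $\{i,i+1\}$ is $\overline{w_{i+1}}\,e_i-\overline{w_i}\,e_{i+1}$, not $w_{i+1}e_i-w_ie_{i+1}$; for the latter the quadratic form evaluates to $2|w_iw_{i+1}|^2-w_i^2\overline{w_{i+1}}^{\,2}-\overline{w_i}^{\,2}w_{i+1}^2=4\bigl(\mathrm{Im}(w_i\overline{w_{i+1}})\bigr)^2$, which need not vanish. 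With the conjugates corrected, that variant also goes through and yields $w_{i+1}a_{\ell i}=w_i a_{\ell,i+1}$ for all $\ell$ as you intend.
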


\begin{proof}
We are claiming that $a$ is uniquely determined by its main diagonal and the two adjacent descending diagonals. We will prove the statement by induction on $k$.
Let then $a\in M_k(\C)$ be a matrix with elements $a_{ij}=w_iw_j^*$ if $|i-j|\leq 1$ and call $\lambda_{ij}:=a_{ij}/w_iw_j^*$ if $|i-j|>1$.
If $k=3$, one has
$$
a=\begin{pmatrix}
|w_1|^2 & w_1w_2^* & \lambda_{13}w_1w_3^* \\
w_2w_1^* & |w_2|^2 & w_2w_3^* \\
\lambda_{13}^*w_3w_1^* & w_3w_2^* & |w_3|^2
\end{pmatrix} .
$$
Since
$$
\det(a)=(\lambda_{13}+\lambda_{13}^*-|\lambda_{13}|^2-1)|w_1w_2w_3|^2
$$
and $w_1w_2w_3\neq 0$, the condition $\det(a)\geq 0$ implies $\lambda_{13}+\lambda_{13}^*-|\lambda_{13}|^2-1\geq 0$. The unique solution of such an inequality is $\lambda_{13}=1$, which is what we wished to prove.

Now let $k\geq 4$ be arbitrary and assume by inductive hypothesis that the lemma is true for matrices of size $k-1$. If $a=(a_{ij})\in M_k(\C)$ is positive semidefinite and $a_{ij}=w_iw_j^*$ for all $|i-j|\leq 1$,
the matrix $a'=(a_{ij})\in M_{k-1}(\C)$ obtained from $a$ by removing the last row and column is positive semidefinite and satisfies $a'_{ij}=w_iw_j^*$ for all $|i-j|\leq 1$. By inductive hypothesis $a'_{ij}=w_iw_j^*$ for all $i,j\in\{1,\ldots,k-1\}$.
Thus, the principal minor of $a$ given by elements in the intersection of the rows and columns $\{1,k-1,k\}$ is:
$$
\det\begin{pmatrix}
|w_1|^2 & w_1w_{k-1}^* & \lambda_{1k}w_1w_k^* \\
w_{k-1}w_1^* & |w_{k-1}|^2 & w_{k-1}w_k^* \\
\lambda_{1k}^*w_kw_1^* & w_kw_{k-1}^* & |w_k|^2
\end{pmatrix}=(\lambda_{1k}+\lambda_{1k}^*-|\lambda_{1k}|^2-1)|w_1w_{k-1}w_k|^2 .
$$
The determinant above is $\geq 0$ if and only if $\lambda_{1k}=1$, which completes the inductive step.
\end{proof}

\begin{lemma}\label{lemma:tolsing}
If $\rho\in M_n(\C)$ is a density matrix, $v\in\C^n$ an $R$-tolerant unit vector and $T(\rho)=T(P_v)$, then $\rho=P_v$.
\end{lemma}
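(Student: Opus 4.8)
The plan is to unwind the hypothesis $T(\rho)=T(P_v)$ into equations on matrix entries and then propagate them through the connected graph of $R_v$ by means of Lemma \ref{lemma:descdiag}. Recall from \eqref{eq:trunmapfin} that $T$ retains exactly the matrix elements indexed by $R$ and annihilates the others, so $T(\rho)=T(P_v)$ is equivalent to $\rho_{ij}=v_iv_j^*$ for every $(i,j)\in R$. In particular, since every pair $(i,i)$ lies in $R$ by reflexivity, $\rho_{ii}=|v_i|^2$ for all $i$.

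First I dispose of the vanishing components of $v$. If $v_i=0$ then $\rho_{ii}=0$; since $\rho$ is positive semidefinite, the $2\times 2$ principal submatrix of $\rho$ on rows and columns $\{i,j\}$ has nonnegative determinant $-|\rho_{ij}|^2\ge 0$, which forces $\rho_{ij}=0=v_iv_j^*$ for every $j$. Hence the rows and columns of $\rho$ indexed outside the support $S:=\{i:v_i\neq 0\}$ already agree with those of $P_v$, and it remains only to prove $\rho_{ij}=v_iv_j^*$ for all $i,j\in S$.

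Now fix $i,j\in S$. Since $v$ is $R$-tolerant, the graph of $R_v$ is connected, so there is a simple path $i=i_0,i_1,\dots,i_m=j$ with all $i_k\in S$ and $(i_k,i_{k+1})\in R$ for each $k$. If $m\le 1$ we are done immediately: for $m=0$ this is the equation $\rho_{ii}=|v_i|^2$ above, and for $m=1$ it is the defining relation for the edge $(i,j)\in R$. If $m\ge 2$, set $w:=(v_{i_0},\dots,v_{i_m})\in\C^{m+1}$, all of whose entries are nonzero, and let $\rho'\in M_{m+1}(\C)$ be the principal submatrix of $\rho$ on the index set $\{i_0,\dots,i_m\}$ in this order, which is again positive semidefinite. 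By the relations established above, $\rho'$ has its main diagonal entries equal to $w_kw_k^*$ (reflexivity) and its two adjacent descending diagonals equal to $w_kw_l^*$ (the path edges), i.e.\ $\rho'_{kl}=w_kw_l^*$ whenever $|k-l|\le 1$. Lemma \ref{lemma:descdiag} (applied with this $k=m+1\ge 3$) then yields $\rho'_{kl}=w_kw_l^*$ for all $k,l$, and in particular $\rho_{i_0i_m}=v_{i_0}v_{i_m}^*$, i.e.\ $\rho_{ij}=v_iv_j^*$. Combining the two cases gives $\rho=P_v$.

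I expect the only delicate point to be this reduction to paths: one must verify that the data inherited by $\rho$ along a simple path of $R_v$ is exactly the tridiagonal-type input that Lemma \ref{lemma:descdiag} requires, and keep track of the degenerate cases $m\le 1$ separately. The remaining ingredients — the equivalence of $T(\rho)=T(P_v)$ with entrywise equality on $R$, and the vanishing of a row and column of a positive semidefinite matrix at a zero diagonal entry — are routine.
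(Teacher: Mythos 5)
Your proof is correct and follows essentially the same route as the paper's: reduce $T(\rho)=T(P_v)$ to entrywise equality on $R$, kill the rows and columns at zero components of $v$ via the positive semidefinite $2\times 2$ minor argument, and propagate equality to the remaining off-$R$ entries by restricting $\rho$ to a path in the connected graph $R_v$ and invoking Lemma \ref{lemma:descdiag}. The only difference is organizational (you split off the support of $v$ first and track the degenerate path lengths $m\le 1$ explicitly, whereas the paper cases on $(i,j)\notin R$ directly), which does not change the substance.
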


\begin{proof}
Let $v\in\C^n$ be an $R$-tolerant unit vector and $\rho=(\rho_{ij})\in M_n(\C)$.
The condition $T(\rho)=T(P_v)$ gives $\rho_{ij}=v_iv_j^*$ for all $(i,j)\in R$.
Using the condition $\rho\geq 0$ we now prove that $\rho_{ij}=v_iv_j^*$ for all $(i,j)\notin R$ as well. Since we are working with Hermitian matrices, it is enough to consider $i<j$.
Thus, in the rest of the proof $(i,j)\notin R$ and $i<j$, and we assume that $\rho\geq 0$.

If $v_i=0$, since the submatrix:
$$
\begin{pmatrix}
\rho_{ii} & \rho_{ij} \\
\rho_{ji} & \rho_{jj}
\end{pmatrix}=
\begin{pmatrix}
0 & \rho_{ij} \\
\rho_{ij}^* & |v_j|^2
\end{pmatrix}
$$
must have non-negative determinant, we deduce that $\rho_{ij}=0$. But $v_iv_j^*=0$ as well, so $\rho_{ij}=v_iv_j^*$.
The case $v_j=0$ is analogous.

If both $v_i$ and $v_j$ are non-zero,
since $v$ is $R$-tolerant, there exists a path in $R_v$ from $v_i$ to $v_j$. That is, there exists non-zero components $v_{l_1},\ldots,v_{l_k}$ such that 
$l_1=i$, $l_k=j$ and $(l_r,l_{r+1})\in R$ for all $1\leq r<k$. Since $(i,j)\notin R$, it must be $k\geq 3$.

For $r,s\in\{1,\ldots,k\}$, call $a_{rs}:=\rho_{l_rl_s}$ and $w_r:=v_{l_r}$. Observe that $a=(a_{rs})\in M_k(\C)$ and $w=(w_r)\in\C^k$
satisfy the hypotheses of Lemma \ref{lemma:descdiag}. We conclude that $a_{rs}=w_rw_s^*$ for all $r,s\in\{1,\ldots,k\}$.
In particular, for $r=1$ and $s=k$ we get $\rho_{ij}=a_{1k}=w_1w_k^*=v_iv_j^*$, that is exactly what we wished to prove.
\end{proof}

\begin{lemma}\label{lemma:pureconnected}
Let $w=(w_1,\ldots,w_k)\in\C^k$ be a unit vector with $w_i\neq 0$ for all $i\in\{1,\ldots,k\}$, $R'$ a tolerance relation on $\{1,\ldots,k\}$ and $T:M_k(\C)\to A(R')$ the corresponding truncation map. If $T'(P_w)$ is the weak density matrix of a pure state of $A(R')$, then the graph of $R'$ must be connected.
\end{lemma}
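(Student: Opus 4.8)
The plan is to prove the contrapositive: assuming the graph of $R'$ is \emph{disconnected}, I will exhibit $T(P_w)$ as a proper convex combination of two \emph{distinct} weak density matrices of $A(R')$, so that the corresponding functional cannot be a pure state.

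First I would fix a connected component of the graph of $R'$, call its vertex set $S_1$, and let $S_2:=\{1,\dots,k\}\smallsetminus S_1$; disconnectedness gives $S_1,S_2\neq\emptyset$ and no edge of $R'$ joins $S_1$ to $S_2$. As recalled in \S\ref{sec:finitedim}, $A(R')$ then splits as a direct sum $A(R'_1)\oplus A(R'_2)$ of the tolerance algebras of the induced relations $R'_i$ on $S_i$, and the truncation map $T$ of Example~\ref{ex:tol12} respects this block decomposition: writing a matrix in $(S_1,S_2)$-block form, $T$ annihilates the two off-diagonal blocks (there being no edges between $S_1$ and $S_2$) and acts on the $i$-th diagonal block as the truncation map $T_i\colon M_{|S_i|}(\C)\to A(R'_i)$.

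Next I would compute $T(P_w)$. Let $w^{(i)}$ be the restriction of $w$ to $S_i$, which is non-zero since every component of $w$ is. In block form $P_w=ww^*=\begin{pmatrix} w^{(1)}(w^{(1)})^* & \ast \\ \ast & w^{(2)}(w^{(2)})^*\end{pmatrix}$, hence $T(P_w)=T_1\big(w^{(1)}(w^{(1)})^*\big)\oplus T_2\big(w^{(2)}(w^{(2)})^*\big)$. Set $\lambda:=\|w^{(1)}\|^2$, so that $1-\lambda=\|w^{(2)}\|^2$ because $\|w\|=1$, and both lie strictly between $0$ and $1$; put $\hat w^{(i)}:=w^{(i)}/\|w^{(i)}\|$. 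Then $T(P_w)=\lambda\rho_1+(1-\lambda)\rho_2$ with $\rho_1:=T_1(P_{\hat w^{(1)}})\oplus 0$ and $\rho_2:=0\oplus T_2(P_{\hat w^{(2)}})$. Each $\rho_i$ is $T$ applied to a rank-one projection of $M_k(\C)$ (namely $P_{\hat w^{(1)}}\oplus 0$, resp.\ $0\oplus P_{\hat w^{(2)}}$), which is positive semidefinite of trace $1$; hence $\rho_i\succeq 0$ and $\tr(\rho_i)=1$, i.e.\ $\rho_i\in\mathcal D(A(R'))$. Moreover $\rho_1\neq\rho_2$, since for any $i_0\in S_1$ the $(i_0,i_0)$ entry of $\rho_1$ equals $|\hat w^{(1)}_{i_0}|^2>0$ while that of $\rho_2$ is $0$.

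Finally, by Proposition~\ref{prop:orderiso} the correspondence $\rho\mapsto\varphi_\rho$ is an $\R$-affine bijection from the weak density matrices $\mathcal D(A(R'))$ onto the state space $\mathcal S(A(R'))$, hence carries extremal points to pure states; since $T(P_w)$ has just been written as a proper convex combination of two distinct elements of $\mathcal D(A(R'))$, it is not extremal there, so $\varphi_{T(P_w)}$ is not pure. This is exactly the contrapositive of the lemma. I do not expect a genuine obstacle here; the only step requiring a little care is checking that $\rho_1,\rho_2$ really lie in $\mathcal D(A(R'))$ and are distinct, which the block computation above settles.
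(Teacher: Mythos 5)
Your proof is correct and follows essentially the same route as the paper's: both argue the contrapositive by splitting the disconnected graph into two blocks, writing $T(P_w)$ as the proper convex combination $t\,T(P_{w'})+(1-t)\,T(P_{w''})$ of two distinct weak density matrices supported on the separate components, and concluding non-purity. The only cosmetic difference is that you explicitly verify $\rho_1\neq\rho_2$ and cite Proposition~\ref{prop:orderiso} for the affine bijection $\mathcal D(A)\to\mathcal S(A)$, steps the paper leaves implicit.
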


\begin{proof}
By contradiction, assume that the graph of $R'$ is not connected. After renaming the vertices and reordering rows and columns we can assume that, for some $m\in\{1,\ldots,k-1\}$, the first $m$ vertices are disconnected from the last $k-m$, that is:
if $i\leq m<j$ then $(i,j)\notin R'$.
This means that $T(P_w)$ is a block diagonal matrix of the form:
$$
T(P_w)=\left(\!\begin{array}{c|c}
Q' \\
\hline
& Q''
\end{array}\!\right)
$$
where $Q'\in M_m(\C)$, $Q''\in M_{k-m}(\C)$, and off-diagonal blocks are zero. Let
$$
t:=\sum_{i=1}^m|w_i|^2
$$
and observe that, since $w$ has unit norm, $1-t=\sum_{i=m+1}^k|w_i|^2$. Since $w$ has all components different from zero, both $t$ and $1-t$ are non-zero, that means $0<t<1$.
Consider the unit vectors:
$$
w':=t^{-1/2}(w_1,\ldots,w_m,0,\ldots,0) \qquad\text{and}\qquad
w'':=(1-t)^{-1/2}(0,\ldots,0,w_{m+1},\ldots,w_k) .
$$
Clearly
$$
\left(\!\begin{array}{c|c}
Q' \\
\hline
& 0
\end{array}\!\right)=t\, T(P_{w'})\qquad\text{and}\qquad
\left(\!\begin{array}{c|c}
0 \\
\hline
& Q''
\end{array}\!\right)=(1-t)\, T(P_{w''}) .
$$
Thus $T(P_w)=t\, T(P_{w'})+(1-t)\, T(P_{w''})$, and the corresponding state of $A(R')$ is not pure.
\end{proof}

We are now ready to prove Proposition \ref{prop:Rtol}.

\begin{proof}[Proof of Prop.~\ref{prop:Rtol}]
Let $v\in\C^n$ be a unit vector and denote by $\varphi$ the state of $A(R)$ with weak density matrix $T(P_v)$.

If $v$ is $R$-tolerant, it follows from Lemma \ref{lemma:tolsing} that $F^{-1}(\varphi)=\{\psi\}$ is a singleton, with $\psi$ the state with density matrix $P_v$.
Thus $F$ is injective on $R$-tolerant pure states of $M_n(\C)$.
From Lemma \ref{lemma:previous}(iii), it also follows that $\varphi$ is a pure state of $A(R)$.
It remains to prove that if $v$ is not $R$-tolerant, then $\varphi$ is not a pure state of $A(R)$.

Without loss of generality, we can assume that the non-zero components of $v$ are the first $k$ ($k\geq 1$).
We can identify $A(R_v)$ (risp.~$M_k(\C)$) with the vector subspace of $A(R)$ (risp.~$M_n(\C)$) of matrices with the last $n-k$ rows and columns filled with zeros. 
We now apply Lemma \ref{lemma:pureconnected} to $R'=R_v$ and $w=(v_1,\ldots,v_k)$. Since $R_v$ is not connected,
$T'(P_w)$ is not the weak density matrix of a pure state of $A(R')$. That means there are two distinct density matrices $\rho_1,\rho_2\in M_k(\C)$ and a $t\in\ointerval{0}{1}$ such that $T'(P_w)=tT'(\rho_1)+(1-t)T'(\rho_2)$. But $T(a)=T'(a)$ for all $a\in M_k(\C)$, hence $T(P_v)=T'(P_w)=tT(\rho_1)+(1-t)T(\rho_2)$ is a non-trivial convex combination of two weak density matrices. We conclude that the state $\varphi$ is not pure.
\end{proof}

As a corollary of the above results, we now show that the assumption in Prop.~\ref{prop:counter} is necessary.

\begin{prop}\label{prop:isnecessary}
Let $R$ be a tolerance relation on $X:=\{1,\ldots,n\}$ and assume that each $a\succeq 0$ in $A(R)$ is of the form \eqref{eq:naturalnotion}, for some $b_1,\ldots,b_k\in A(R)$. Then the graph of $R$ has a dominant vertex in each connected component.
\end{prop}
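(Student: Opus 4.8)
The plan is to prove the contrapositive: assuming some connected component of the graph of $R$ has no dominant vertex, I will exhibit an element $a\succeq 0$ in $A(R)$ that is not of the form \eqref{eq:naturalnotion}. First I would reduce to the case that the graph $\Gamma=(V,E)$ of $R$ is connected. Since $A(R)$ is the direct sum of the tolerance algebras of the connected components of $\Gamma$, and every $b_i\in A(R)$ is block diagonal with respect to this decomposition while $T$ and $\star$ preserve blocks, both the relation $\succeq$ and the property of being of the form \eqref{eq:naturalnotion} decompose componentwise; so it suffices to produce the bad element inside the block of a component with no dominant vertex. Hence from now on $\Gamma$ is connected, $|V|=n\geq 2$, with no dominant vertex, and I write $N[j]:=\{l:(l,j)\in R\}$ for the closed neighbourhood of a vertex $j$, so that $N[j]$ is a \emph{proper} subset of $V$ for every $j$.

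The key preliminary observation is a concrete description of the elements \eqref{eq:naturalnotion}. Since $b_i\star b_i^*=T(b_ib_i^*)$ and $b_ib_i^*=\sum_j c_jc_j^*$, where $c_j\in\C^n$ is the $j$-th column of $b_i$, and since a column of a matrix in $A(R)$ indexed by $j$ is supported in $N[j]$ (and conversely any $w$ supported in some $N[j]$ is the $j$-th column of an element of $A(R)$, the other columns being zero), the elements \eqref{eq:naturalnotion} are exactly the finite sums $\sum_\alpha T\bigl(w^{(\alpha)}(w^{(\alpha)})^*\bigr)$ with each $w^{(\alpha)}\in\C^n$ supported in some $N[j_\alpha]$, $j_\alpha\in V$.

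Then I would take $a:=T(J)$, where $J$ is the all-ones $n\times n$ matrix; here $a\succeq 0$ because $J=\mathbf{1}\mathbf{1}^*\geq 0$ (with $\mathbf{1}$ the all-ones vector). Suppose, towards a contradiction, that $a=\sum_\alpha T\bigl(w^{(\alpha)}(w^{(\alpha)})^*\bigr)$ as above. Comparing $(l,l)$ entries, and using that $T$ leaves the diagonal unchanged, gives $\sum_\alpha|w^{(\alpha)}_l|^2=1$ for every $l\in V$, i.e.\ the ``row vectors'' $r_l:=(w^{(\alpha)}_l)_\alpha$ are all unit vectors; comparing $(l,m)$ entries for $(l,m)\in E$ gives $\inner{r_l,r_m}=1$, hence $r_l=r_m$ by the equality case of Cauchy--Schwarz (both vectors have norm $1$ and inner product $1$). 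Since $\Gamma$ is connected, all the $r_l$ coincide with a single unit vector $r$, so $w^{(\alpha)}=r_\alpha\mathbf{1}$ for every $\alpha$. But $w^{(\alpha)}$ is supported in the proper subset $N[j_\alpha]\subsetneq V$, forcing $r_\alpha=0$ for all $\alpha$, hence $r=0$, contradicting $\|r\|=1$. Therefore $a$ is not of the form \eqref{eq:naturalnotion}, which proves the proposition.

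The only delicate point is really the first key observation — checking carefully that the cone of elements \eqref{eq:naturalnotion} coincides with the cone generated by the $T(ww^*)$ with $\mathrm{supp}(w)$ in a closed neighbourhood — together with the componentwise reduction; after that, the contradiction via Cauchy--Schwarz and connectedness is immediate. One can alternatively route the argument through the earlier results: if every $a\succeq 0$ were of the form \eqref{eq:naturalnotion}, then writing the weak density matrix $T(P_v)$ of the pure state attached by Prop.~\ref{prop:Rtol} to an $R$-tolerant unit vector $v$ as a convex combination $\sum_\alpha\|w^{(\alpha)}\|^2\,T\bigl(P_{w^{(\alpha)}/\|w^{(\alpha)}\|}\bigr)$ of weak density matrices, purity together with Lemma~\ref{lemma:tolsing} would force each nonzero $w^{(\alpha)}$ to be proportional to $v$, hence $\mathrm{supp}(v)\subseteq N[j_\alpha]$; taking $v$ with support equal to a whole component having no dominant vertex then yields the contradiction.
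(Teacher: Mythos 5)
Your proof is correct, but it takes a genuinely different route from the paper's. Both arguments reduce to a connected component and use the same witness, the truncated all--ones matrix $T(J)=T(\mathbf{1}\mathbf{1}^*)$; they diverge in how they show it cannot be written in the form \eqref{eq:naturalnotion}. The paper invokes Lemma \ref{lemma:tolsing} (with the $R$-tolerant vector $\mathbf{1}$) to conclude that any representation $T(\sum_i b_ib_i^*)=T(P)$ forces $\sum_i b_ib_i^*=P$ exactly, and then a rank argument produces a single $b\in A(R)$ with $bb^*=P$, whose nonzero row exhibits a dominant vertex. You instead first identify the cone of elements \eqref{eq:naturalnotion} with the cone generated by the $T(ww^*)$ with $\mathrm{supp}(w)$ contained in a closed neighbourhood $N[j]$ --- a correct and useful reformulation, since the columns of $b\in A(R)$ are supported in closed neighbourhoods --- and then compare entries of $T(J)$ with $\sum_\alpha T\bigl(w^{(\alpha)}(w^{(\alpha)})^*\bigr)$ on the diagonal and on edges: the equality case of Cauchy--Schwarz plus connectedness forces every $w^{(\alpha)}$ to be a multiple of $\mathbf{1}$, which is incompatible with support in a proper $N[j_\alpha]$. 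In effect your Cauchy--Schwarz step re-derives, for the all-ones vector, exactly the rigidity statement that the paper extracts from Lemma \ref{lemma:tolsing} (itself resting on the positive-semidefinite completion Lemma \ref{lemma:descdiag}); what your version buys is a short, self-contained, purely elementary argument that never mentions states, purity, or positive semidefinite completions, while the paper's version is shorter \emph{given} the machinery already built for Prop.~\ref{prop:Rtol}. Your closing alternative sketch via Prop.~\ref{prop:Rtol} and Lemma \ref{lemma:tolsing} is essentially the paper's proof.
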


\begin{proof}
Similarly to the proof of Prop.~\ref{prop:counter}, it is enough to prove the statement when the graph of $R$ is connected.
Since $R$ is connected, $w:=(1,1,\ldots,1)$ is an $R$-tolerant vector.
Called $P:=n^{-1}(w_iw_j^*)$, we now prove that $a:=T(P)$ can be written in the form \eqref{eq:naturalnotion} only if there the graph of $R$ has a dominant vertex (at least one).

Assume that $a=T(\sum_{i=1}^kb_ib_i^*)$ for some $b_1,\ldots,b_k\in A(R)$. From Lemma \ref{lemma:tolsing} it follows
\begin{equation}\label{eq:becauseof}
{\textstyle\sum_{i=1}^kb_ib_i^*}=P .
\end{equation}
From this we deduce that
$$
\|b_i^*v\|^2=\inner{v,b_ib_i^*v}\leq\inner{v,Pv}=0
$$
--- i.e.~$b_iv=0$ --- for all $i=1,\ldots,k$ and for every $v$ in the kernel of $P$.
Because of \eqref{eq:becauseof}, not all matrices $b_ib_i^*$'s can be zero: for some $i_0\in\{1,\ldots,k\}$, one has $b_{i_0}b_{i_0}^*\neq 0$.
Since $b_{i_0}b_{i_0}^*$ vanishes on the kernel of the rank $1$-projection $P$, it must be $b_{i_0}b_{i_0}^*=\lambda P$ for some $\lambda >0$. Called $b:=\lambda^{-1/2}b_{i_0}$, we proved that
$$
P=bb^*
$$
for some $b\in A(R)$. Since $\ker P=\ker b^*$, the matrix $b^*$ must be of rank $1$, hence $b^*=n^{-1/2}(u_iw_j^*)$ for some unit vector $u\in\C^n$, and with $w$ the vector defined above. That is:
$$
\sqrt{n}\,b^*=\begin{pmatrix}
u_1 & u_1 & \ldots & u_1 \\
u_2 & u_2 & \ldots & u_2 \\
\vdots & \vdots & & \vdots \\
u_n & u_n & \ldots & u_n
\end{pmatrix} .
$$
At least one row of $b^*$, say $j_0$, must be non-zero. Since $b^*\in A(R)$ and in the row $j_0$ all its elements are different from zero, this means that $(i,j_0)\in R$ for all $i=1,\ldots,n$. Thus, $j_0$ is a dominant vertex in the graph of $R$.
\end{proof}

\section{Positive operator valued measures: a curious example}\label{sec:6}

Given an observable, represented by a Hermitian matrix $P$, and a physical system in a state described by a density matrix $\rho$ of the same size of $P$, a measurement of such observable returns an eigenvector of $P$.
Order the eigenvalues and label them with an integer, say from $1$ to $k$.
The outcome of a measurement is then an integer $i\in\{1,\ldots,k\}$, telling us which eigenvalue comes out of our measurement apparatus.
Performing the experiment many times gives the frequency of the outcome $i$, that is an estimate of the probability $\rho^P(i)$ that the state collapses in an eigenstate associated to the eigenvalue $i$. Such a probability is expressed by the formula:
\begin{equation}\label{eq:rhotorhoP}
\rho^P(i):=\tr(\rho P_i) ,
\end{equation}
where $P_i$ is the spectral projection of the $i$-th eigenvalue of $P$.
The spectral projections are an example of a positive operator valued measure (POVM), which in the simplest case are a collection
of $k$ positive semidefinite $n\times n$ matrices $P_i$, with $i=1,\ldots,k\leq n$, such that $\sum_{i=1}^kP_i=1$.
The complex linear span of the $P_i$'s is a concrete operator system $A\subset M_n(\C)$.
From now on, we will denote by $P$ a collection of positive semidefinite $n\times n$ matrices $P_1,\ldots,P_k$.

POVMs arise in the study of physical systems of which we have partial knowledge (``open'' quantum systems), and an excellent textbook reference is \cite{Per02}.

Given a POVM, $P:=(P_i)_{i=1}^k$, the formula \eqref{eq:rhotorhoP} defines a convex-linear map
\begin{equation}\label{eq:POVMcond}
\rho\mapsto\rho^P
\end{equation}
from density matrices to probability distributions on $k$ points, and a natural question is whether this map is injective: i.e.~the knowledge of the probabilities $\rho^P(i)$ allows one to reconstruct the state $\rho$. If the map \eqref{eq:POVMcond} is injective, the POVM $P$ is called ``informationally complete'' (IC). Since, in such a case, the map gives a smooth embedding of a $n^2-1$ smooth manifold (the interior of the set of $n\times n$ density matrices) into the interior of a $k-1$ simplex, this forces $k\geq n^2$.

Let us consider on $M_n(\C)$ the inner product $\inner{a,b}:=\tr(a^*b)$, and extend \eqref{eq:POVMcond} to a linear map $L:M_n(\C)\to \R^k$ in the obvious way:
\begin{equation}\label{eq:mapL}
L(a):=\left(\inner{P_1,a},\ldots,\inner{P_k,a}\right).
\end{equation}

\begin{lemma}\label{lemma:uniqueness}
Let $P=(P_i)_{i=1}^k$ be a POVM. Then, the convex-linear map \eqref{eq:POVMcond} is injective if and only if the linear map \eqref{eq:mapL} is.
\end{lemma}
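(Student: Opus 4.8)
The plan is to exploit the fact that \eqref{eq:POVMcond} is simply the restriction of the linear map \eqref{eq:mapL} to the set of density matrices, which is a convex body with non-empty interior in the real affine hyperplane $H := \{a \in M_n(\C) : a = a^*,\ \tr(a) = 1\}$. The implication from injectivity of $L$ to injectivity of \eqref{eq:POVMcond} is immediate, since the latter map is a restriction of the former; so the real content is the converse.

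For the converse, first I would reduce everything to the real vector space $V$ of Hermitian matrices, of dimension $n^2$, noting that $L$ restricted to $V$ is a real-linear map $V \to \R^k$ (the values $\inner{P_i, a} = \tr(P_i a)$ are real when $a$ and $P_i$ are Hermitian), and that $L$ is injective on $M_n(\C)$ if and only if it is injective on $V$, since $M_n(\C) = V \oplus \mathrm{i}V$ as a real vector space and $L(\mathrm{i}a) = \mathrm{i}L(a)$ forces the real and imaginary parts to vanish separately. Next, observe that the set $\mathcal{D}$ of density matrices is a convex subset of the affine hyperplane $H = \{a \in V : \tr(a) = 1\}$, and crucially it has non-empty interior in $H$: the maximally mixed state $\frac{1}{n}1$ is an interior point, since any Hermitian trace-one matrix sufficiently close to it is still positive definite. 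The key linear-algebra step is then: a linear map is injective on a set with non-empty interior (in the affine subspace it spans) if and only if it is injective on that whole affine subspace. Concretely, suppose \eqref{eq:POVMcond} is injective; if $L$ were not injective on $V$ there would be a non-zero $h \in V$ with $L(h) = 0$; note $\tr(h)$ must be zero because $\sum_i P_i = 1$ gives $\sum_i \inner{P_i, h} = \tr(h) = 0$ while each summand vanishes — wait, more simply, $\tr(h) = \inner{1,h} = \sum_i \inner{P_i,h} = 0$. Then for $t$ small enough $\rho_{\pm} := \frac{1}{n}1 \pm t h$ are two distinct density matrices with $\rho_+^P = \rho_-^P$, contradicting injectivity of \eqref{eq:POVMcond}. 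Hence $L$ is injective on $V$, and therefore on $M_n(\C)$.

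The main obstacle, such as it is, is the bookkeeping of affine versus linear structures: \eqref{eq:POVMcond} is only defined on an affine body, \eqref{eq:mapL} is genuinely linear, and one must be careful that failure of linear injectivity produces a kernel direction $h$ that is \emph{traceless} and hence tangent to the affine hyperplane $H$, so that the perturbations $\frac{1}{n}1 \pm th$ stay within $\mathcal{D}$ for small $t$. Once that point is handled, the argument is routine. I would also remark that this lemma is exactly what justifies calling the span $A$ of the $P_i$ an operator system whose state-reconstruction property is equivalent to the linear-algebraic condition $\dim \mathrm{Im}(L) = n^2$, i.e.\ the condition $k \geq n^2$ noted above being necessary for IC-ness.
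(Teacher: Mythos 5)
Your proof is correct, and its overall strategy coincides with the paper's: both reduce the converse direction to showing that a non-zero element of $\ker L$ would yield a non-zero \emph{traceless Hermitian} matrix $h$ with $L(h)=0$ (using $\sum_i P_i=1$ to get $\tr(h)=0$), and then exhibit two distinct density matrices with the same image under \eqref{eq:POVMcond}. The only real difference is in how those two density matrices are produced. The paper takes the Jordan decomposition $h=b_1-b_2$ into its positive and negative parts, notes that $\tr(b_1)=\tr(b_2)=:\lambda\neq 0$, and sets $\rho_j:=\lambda^{-1}b_j$; this is coordinate-free and requires no smallness argument. You instead perturb the maximally mixed state, $\rho_\pm:=\tfrac{1}{n}1\pm th$, which requires the (easy but necessary) observation that $\tfrac{1}{n}1$ is an interior point of the set of density matrices in the trace-one Hermitian hyperplane, so that $\rho_\pm\succeq 0$ for $t$ small. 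Both constructions are valid; yours has the mild advantage of making the geometric picture (injectivity on a convex body with non-empty interior forces injectivity on its affine span) explicit, while the paper's is slightly more economical. Your reduction from complex to Hermitian kernel elements via $M_n(\C)=V\oplus \mathrm{i}V$ and the realness of $\tr(P_i a)$ for Hermitian $a$ is also correct and is implicit in the paper's phrase ``we can find a non-zero Hermitian matrix $a$ in the kernel.''
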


\begin{proof}
The implication ``$\Leftarrow$'' is clear. We now prove ``$\Rightarrow$''.
Every $a\in\ker L$ has $\tr(a)=\sum_{i=1}^k\inner{a,P_i}=0$. Assume that $\ker L\neq\{0\}$. Then we can find a non-zero Hermitian matrix $a$ in the kernel, that can be written as $a=b_1-b_2$ with $b_1$ and $b_2$ two distinct positive semidefinite matrices with $\tr(b_1)=\tr(b_2)=:\lambda$. The trace of a positive semidefinite matrix is zero if and only if the matrix is zero, but $b_1$ and $b_2$ cannot be both zero, thus $\lambda\neq 0$. Hence $\rho_1:=\lambda^{-1}b_1$ and $\rho_2:=\lambda^{-1}b_2$ are two distinct density matrices such that $\rho_1^P=\rho_2^P$, and \eqref{eq:POVMcond} is not injective.
\end{proof}

If $A$ is the operator system associated to our POVM, since $\ker L=A^\perp$, an immediate corollary of the previous lemma is the well-known fact that the POVM is IC if and only if $A=M_n(\C)$.

We now change setting and pass to diagonal matrices, that is we consider a finite \emph{classical} system. 
Notice that, in fact, Lemma \ref{lemma:uniqueness} holds for any C*-subalgebra of $M_n(\C)$ (direct sum of matrix algebra), and in particular for diagonal matrices, i.e.~finite classical systems.

\begin{wrapfigure}[7]{r}{0.2\textwidth}
\begin{center}
\begin{tikzpicture}

\draw[gray!50] (0,0) circle (1);

\foreach \x in {0,30,60}{
          \shade[ball color=yellow] (\x:1) circle(0.1){};
      }

\foreach \x in {90,120,150,180,210,240,270,300,330}{
          \fill[gray!50] (\x:1) circle(0.1);
      }

\end{tikzpicture}
\end{center}

\vspace{-6pt}

\setcaptionwidth{2cm}%
\caption{}\label{fig:1}
\end{wrapfigure}
Let us consider the following simple classical example.
Fix two integers $1\leq k\leq n$.
Suppose we have $n$ points on a circle and a detector with $n$ LEDs at those points (Figure \ref{fig:1}). Only groups of $k$ consecutive LEDs can turn on simultaneously, and when a group is on we know that a particle passed throught one of those $k$ points, but not which one.
The detector cannot distinguish between $k$ consecutive points, meaning that we have a tolerance relation:
$$
i\sim j\iff |i-j|<k .
$$
This is clearly not transitive if $k\notin\{1,n\}$.

The POVM describing this detector is $P=(\tfrac{1}{k}Q_i)_{i=1}^n$, with $Q_i\in M_n(\C)$ the diagonal matrix with $1$ in positions $i,i+1,\ldots,i+k-1$ (mod $n$) and zero everywhere else. For example, for $k=3$ and $n=5$ one has:
$$
Q_1=\left[\begin{smallmatrix}
1 & & & & \\
 & 1 & & & \\
 & & 1 & & \\
 & & & 0 & \\
 & & & & 0
\end{smallmatrix}\right] \hspace{8pt}
Q_2=\left[\begin{smallmatrix}
0 & & & & \\
 & 1 & & & \\
 & & 1 & & \\
 & & & 1 & \\
 & & & & 0
\end{smallmatrix}\right] \hspace{8pt}
Q_3=\left[\begin{smallmatrix}
0 & & & & \\
 & 0 & & & \\
 & & 1 & & \\
 & & & 1 & \\
 & & & & 1
\end{smallmatrix}\right] \hspace{8pt}
Q_4=\left[\begin{smallmatrix}
1 & & & & \\
 & 0 & & & \\
 & & 0 & & \\
 & & & 1 & \\
 & & & & 1
\end{smallmatrix}\right] \hspace{8pt}
Q_5=\left[\begin{smallmatrix}
1 & & & & \\
 & 1 & & & \\
 & & 0 & & \\
 & & & 0 & \\
 & & & & 1
\end{smallmatrix}\right] \!,
$$
where the zeros outside the main diagonal are omitted. If
$$
\rho=\mathrm{diag}( \lambda_1 , \lambda_2 , \ldots , \lambda_n )
$$
is a density matrix (a probability distribution on $n$ points), then
$$
\rho^P(i)=\frac{1}{k}\sum_{j=i}^{i+k-1}\lambda_j
$$
where the index in the sum is defined mod $n$. Since the case $k=1$ is trivial, from now on let us assume that $k\geq 2$.

\begin{prop}
The POVM $P=(\tfrac{1}{k}Q_i)_{i=1}^n$ is IC if and only if $k$ and $n$ are coprime.
\end{prop}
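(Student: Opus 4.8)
The plan is to reduce to injectivity of an explicit linear map on $\C^n$ and then analyze its kernel by elementary arithmetic modulo $n$. Since the matrices $Q_i$ are diagonal and, as noted above, Lemma~\ref{lemma:uniqueness} holds equally for the C*-algebra of diagonal matrices, the POVM $P$ is IC if and only if the linear map $L\colon\C^n\to\C^n$ given by $L(\lambda)_i=\tfrac1k\sum_{j=i}^{i+k-1}\lambda_j$ (all indices read mod $n$) is injective. So everything reduces to computing $\ker L$.

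First I would extract a recursion. If $\lambda\in\ker L$, then subtracting the $i$-th defining equation from the $(i{+}1)$-th, both of which vanish, yields $\lambda_{i+k}=\lambda_i$ for every $i\in\Z/n\Z$. Conversely, if $\lambda_{i+k}=\lambda_i$ for all $i$ and in addition $\sum_{j=0}^{k-1}\lambda_j=0$, then all the defining equations of $\ker L$ coincide and hold, so $\lambda\in\ker L$. Thus $\ker L$ consists exactly of those $\lambda$ that are constant on the cosets of the subgroup $\langle k\rangle\le\Z/n\Z$ and satisfy one linear scalar constraint.

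Now set $d:=\gcd(k,n)$. Since $\langle k\rangle=d\,\Z/n\Z$, the condition $\lambda_{i+k}=\lambda_i$ says precisely that $\lambda_i$ depends only on $i\bmod d$; write $\lambda_i=a_r$ whenever $i\equiv r\pmod d$, with $r\in\{0,\dots,d-1\}$. Because $k$ is a multiple of $d$, any block of $k$ consecutive indices meets each residue class mod $d$ exactly $k/d$ times, so $\sum_{j=i}^{i+k-1}\lambda_j=\tfrac kd\sum_{r=0}^{d-1}a_r$. Hence $\ker L=\{\lambda:\lambda_i=a_{i\bmod d},\ \sum_{r=0}^{d-1}a_r=0\}$, a vector space of dimension $d-1$. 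Therefore $L$ is injective if and only if $d=1$; when $d\ge2$ the vector with $a_0=1$, $a_1=-1$ and all other $a_r=0$ is a nonzero kernel element. Combined with Lemma~\ref{lemma:uniqueness}, this shows $P$ is IC precisely when $\gcd(k,n)=1$.

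I do not expect any real obstacle: the only care needed is the index bookkeeping modulo $n$ and the standard identification $\langle k\rangle=\gcd(k,n)\,\Z/n\Z$. An alternative, slightly heavier route is to observe that $L$ is the circulant operator whose eigenvalue on the $n$-th root of unity $\zeta$ is $\tfrac1k\sum_{l=0}^{k-1}\zeta^l=\tfrac1k\cdot\tfrac{\zeta^k-1}{\zeta-1}$ (and $1$ at $\zeta=1$); this vanishes for some $\zeta\ne1$ exactly when $\zeta$ is also a $k$-th root of unity, i.e.\ exactly when $\gcd(k,n)>1$, giving the same conclusion.
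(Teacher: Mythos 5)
Your argument is correct, and your main route is genuinely different from the paper's. The paper also reduces, via Lemma~\ref{lemma:uniqueness}, to injectivity of the circulant operator $L=\tfrac1k(S^0+S^1+\cdots+S^{k-1})$, but then conjugates the shift $S$ to the clock $C$ (discrete Fourier transform), so that $L$ becomes diagonal with entries $\tfrac1k\sum_{j=0}^{k-1}q^{ij}$ for $q$ a primitive $n$-th root of unity, and checks when one of these geometric sums vanishes; that is exactly the ``alternative, slightly heavier route'' you sketch at the end. Your primary argument instead computes $\ker L$ directly: differencing consecutive equations gives $\lambda_{i+k}=\lambda_i$, so kernel elements are constant on cosets of $\langle k\rangle=\gcd(k,n)\,\Z/n\Z$, and since $d:=\gcd(k,n)$ divides $k$, each window of $k$ consecutive indices meets every residue class mod $d$ exactly $k/d$ times, reducing the remaining conditions to the single constraint $\sum_r a_r=0$. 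This is more elementary (no roots of unity needed) and yields strictly more information than the paper states, namely $\dim\ker L=\gcd(k,n)-1$ together with an explicit kernel vector when $d\geq 2$; the paper's Fourier diagonalization, on the other hand, exhibits the full spectrum of $L$, which fits the Fej\'er-kernel theme elsewhere in the paper. The only points deserving a word of care are the ones you already flag: the reduction to diagonal matrices (the paper's remark that Lemma~\ref{lemma:uniqueness} applies to any C*-subalgebra of $M_n(\C)$) and the cyclic index bookkeeping; both are handled correctly.
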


\begin{proof}
It follows from Lemma \ref{lemma:uniqueness} that \eqref{eq:POVMcond} is injective if and only if
the map $L:\C^n\to\C^n$, sending $(\lambda_1,\ldots,\lambda_n)$ to the tuple with elements $\frac{1}{k}\sum_{j=i}^{i+k-1}\lambda_j$, $1\leq i\leq n$, is injective. The representative matrix of $L$ is the $n\times n$ matrix:
$$
M:=\begin{blockarray}{cccccccccccc}
\BAmulticolumn{6}{c}{\overbrace{\hspace*{3.1cm}}^{k\text{ times}}} & \\[-5pt]
\begin{block}{[cccccccccccc]}
1 & 1 & 1 & \ldots & 1 & 1 & 0 & 0 & 0 & \ldots & 0 & 0 \\
0 & 1 & 1 & \ldots & 1 & 1 & 1 & 0 & 0 & \ldots & 0 & 0 \\
0 & 0 & 1 & \ldots & 1 & 1 & 1 & 1 & 0 & \ldots & 0 & 0 \\
\vdots & \vdots & \vdots && \vdots & \vdots & \vdots & \vdots & \vdots && \vdots & \vdots \\
1 & 1 & 1 & \ldots & 1 & 0 & 0 & 0 & 0 & \ldots & 0 & 1 \\
\end{block}
\end{blockarray}
=S^0+S^1+S^2+\ldots+S^{k-1} ,
$$
where here and in the following we denote by $C$ the \emph{clock} and by $S$ the \emph{shift}, given by
\begingroup
\setlength{\arraycolsep}{4pt}%
\renewcommand{\arraystretch}{0.85}%
$$
C:=\begin{bmatrix}
q & 0 & 0 & \ldots & 0 \\
0 & q^2 & 0 & \ldots & 0 \\
0 & 0 & q^3 & \ldots & 0 \\[-2pt]
\vdots & \vdots & \vdots & & \vdots \\
0 & 0 & 0 & \ldots &  q^n
\end{bmatrix} \qquad\quad
S:=\begin{bmatrix}
0 & 1 & 0 & 0 & \ldots & 0 \\
0 & 0 & 1 & 0 & \ldots & 0 \\
0 & 0 & 0 & 1 & \ldots & 0 \\[-2pt]
\vdots & \vdots & \vdots & \vdots && \vdots \\
0 & 0 & 0 & 0 & \ldots & 1 \\
1 & 0 & 0 & 0 & \ldots & 0
\end{bmatrix} ,
$$
\endgroup
and $q$ is a primitive $n$-th root of $1$. The shift can be transformed into the clock by a unitary transformation, so that $M$ is conjugated to the matrix $C^0+C^1+\ldots+C^{k-1}$. Such a matrix is diagonal, with element in position $(i,i)$ given by
\begin{equation}\label{eq:zero}
\sum_{j=0}^{k-1}q^{ij}
\end{equation}
for all $i=1,\ldots,n$.
The coefficient matrix is invertible if and only if \eqref{eq:zero} is different from zero for all $i=1,\ldots,n$.

Observe that, if $i=n$, then $q^i=1$ and \eqref{eq:zero} equals $k$. If $i<n$, since $q$ is a primitive $n$-th root of $1$ we have $q^i\neq 1$ and
$$
\sum_{j=0}^{k-1}q^{ij}=\frac{1-q^{ik}}{1-q^i}.
$$
Thus, \eqref{eq:zero} is zero if and only if $q^i$ is a $k$-th root of unity, that is $ik$ is a multiple of $n$.
Hence, $L$ is injective if and only if there is no integer $ik<nk$ that is a multiple of $n$, i.e.~there is no integer less than $nk$ that is a common multiple of $n$ and $k$. But this exactly one of the characterizations of coprime numbers.
\end{proof}

When $k$ and $n$ are not coprime, even if the POVM is not IC, the map \eqref{eq:POVMcond} is injective on pure states.
In fact, we can strengthen the claim:

\begin{prop}
If $k<n$, fibers of the map \eqref{eq:POVMcond} which contain pure states are singletons.
\end{prop}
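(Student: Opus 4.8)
The plan is to argue entirely with the combinatorics of arcs on the cycle $\Z/n\Z$. In the present classical setting a density matrix is a probability vector $\lambda=(\lambda_1,\dots,\lambda_n)$ (with $\lambda_j\ge 0$ and $\sum_j\lambda_j=1$), a pure state is a vertex $e_m$ of the simplex (i.e.\ $\lambda_m=1$ and $\lambda_j=0$ for $j\neq m$), and \eqref{eq:POVMcond} is the restriction to the simplex of the linear map $L\colon\C^n\to\C^n$ from the proof of the previous proposition, $L(\lambda)_i=\tfrac1k\sum_{j=i}^{i+k-1}\lambda_j$ with indices read mod $n$. It therefore suffices to fix $m$ and show that $L(\lambda)=L(e_m)$ implies $\lambda=e_m$. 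Here $L(e_m)_i=\tfrac1k$ exactly when $m\in\{i,i+1,\dots,i+k-1\}$, i.e.\ when $i$ lies in the length-$k$ arc $J_m:=\{m-k+1,\dots,m\}$, and $L(e_m)_i=0$ otherwise.

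Suppose $\lambda$ is a probability vector with $L(\lambda)=L(e_m)$. I would use only the equations that assert that a coordinate vanishes: for each $i\notin J_m$ one has $\sum_{j=i}^{i+k-1}\lambda_j=0$, and since every $\lambda_j\ge 0$ this forces $\lambda_j=0$ for all $j$ in the window $\{i,\dots,i+k-1\}$. Consequently $\lambda_j$ is allowed to be non-zero only when no $i\notin J_m$ has $j$ in its window; writing $J_j:=\{j-k+1,\dots,j\}$ for the set of starting points $i$ whose window contains $j$, this says precisely $J_j\subseteq J_m$.

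The single point where the hypothesis $k<n$ enters is the elementary remark that an arc of length $k$ in $\Z/n\Z$ is uniquely determined by its right endpoint, so that the inclusion $J_j\subseteq J_m$ between two arcs of the same length $k$ forces $J_j=J_m$, hence $j=m$. Thus $\lambda_j=0$ for every $j\neq m$, and $\sum_j\lambda_j=1$ gives $\lambda_m=1$, i.e.\ $\lambda=e_m$; the fiber of \eqref{eq:POVMcond} through the pure state $e_m$ is the singleton $\{e_m\}$.

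I do not anticipate a genuine obstacle: the whole proof reduces to non-negativity of probabilities together with the arc-nesting remark (note that for $k=n$ every $Q_i$ is the identity, $L$ is constant, and the claim fails). If one instead reads \eqref{eq:POVMcond} on all of $M_n(\C)$, the same computation first shows that any density matrix in such a fiber has diagonal $e_m$, and then positive semidefiniteness kills all of its off-diagonal entries, so the fiber is $\{E_{mm}\}$ again.
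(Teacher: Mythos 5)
Your proof is correct, and it takes a more streamlined route than the paper's. The paper splits the claim into two steps: first it checks that distinct pure states $E_{mm}$ have distinct images (the images are the $n$ cyclic shifts of $\tfrac1k(1,\dots,1,0,\dots,0)$, pairwise distinct because $n-k\ge 1$), and then it argues by contraposition that any density matrix whose image is such a $0$--$\tfrac1k$ vector must itself be pure: if some $\lambda_{i_0}\in(0,1)$, a short case analysis (on whether $\rho^P(i_0+1)$ vanishes) produces a coordinate of $k\rho^P$ lying strictly between $0$ and $1$. You instead fix the pure state $e_m$ and solve the fiber equation directly, using only the coordinates where $L(e_m)$ vanishes: non-negativity propagates the zeros across each length-$k$ window starting outside $J_m$, and the arc-nesting observation $J_j\subseteq J_m\Rightarrow j=m$ --- which is exactly where $k<n$ enters, playing the role of the paper's $n-k\ge 1$ --- leaves only $\lambda_m$, which normalization pins to $1$. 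Both arguments ultimately rest on the same elementary fact, that a sum of non-negative numbers vanishes only if every term does, but yours dispenses with the separate injectivity-on-pure-states step and with the case distinction, so it is shorter and arguably cleaner. Your closing remark about general (non-diagonal) density matrices is also correct --- the $Q_i$ are diagonal, so the fiber condition constrains only $\mathrm{diag}(\rho)$, and positive semidefiniteness then forces the off-diagonal entries to vanish --- though the paper confines itself to the classical (diagonal) setting and does not make this extension.
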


\begin{proof}
Observe that if $\rho=E_{kk}$, then
\begin{equation}\label{eq:cyclic}
\rho^P=\frac{1}{k}(\overbrace{1,\ldots,1}^{k\text{ times}},\overbrace{0,\ldots,0}^{n-k\text{ times}}) .
\end{equation}
The image of every other pure state is obtained from the formula above with a cyclic permutation, and they are all different if $n-k\geq 1$, i.e.~if at least one component is zero (if the last $1$ in $\rho^P$ is in position $i$, then necessarily $\rho=E_{ii}$).
Thus, the map \eqref{eq:POVMcond} is injective on pure states.

Now we wish to prove that if $\rho^P$ is a cyclic permutation of \eqref{eq:cyclic}, then the state $\rho$ is necessarily pure.
It is enough to show that, if $\rho$ is not pure, then $k\rho^P$ has (at least) one entry that is neither $0$ nor $1$.
If $\rho$ is not pure, it means that it has (at least) one entry $\lambda_{i_0}=\lambda$ which is neither $0$ nor $1$.
Thus
$$
k\rho^P(i_0+1)=\sum_{j=i_0+1}^{i_0+k}\lambda_j\leq \sum_{j\neq i_0}\lambda_j=1-\lambda< 1
$$
(since $k<n$, $\lambda_{i_0}$ does not appear in the former sum). If $\rho^P(i_0+1)\neq 0$ the proof is concluded. If $\rho^P(i_0+1)=0$, then $\lambda_j=0$ for all $i_0<j\leq i_0+k$. Therefore $k\rho^P(i_0)=\lambda_{i_0}=\lambda$ is neither $0$ nor $1$.
\end{proof}

\begin{samepage}
\begin{center}
\textsc{Acknowledgements}
\end{center}
We thank Alain Connes and Walter van Suijlekom for useful discussions.
FL and FDA acknowledge support form the INFN Iniziativa Specifica GeoSymQFT. FL~acknowledges financial support from the State Agency for Research of the Spanish Ministry of Science
and Innovation through the ``Unit of Excellence Maria de Maeztu 2020--2023'' award to the Institute of Cosmos Sciences (CEX2019-000918-M) and from PID2019-105614GB-C21 and
2017-SGR-929 grants.
GL acknowledges partial support from INFN, Iniziativa Specifica GAST,
from INdAM-GNSAGA and from the INDAM-CNRS IRL-LYSM.
\end{samepage}

\end{document}